\newtheorem*{rep@theorem}{\rep@title}
\newcommand{\newreptheorem}[2]{%
	\newenvironment{rep#1}[1]{%
		\def\rep@title{#2 \ref{##1}}%
		\begin{rep@theorem}}%
		{\end{rep@theorem}}}
\numberwithin{equation}{section}
\newtheorem{theorem}{Theorem}[section]
\newtheorem{proposition}[theorem]{Proposition}
\newtheorem{question}[theorem]{Question}
\newtheorem{corollary}[theorem]{Corollary}
\newtheorem{conjecture}[theorem]{Conjecture}
\newtheorem{lemma}[theorem]{Lemma}
\theoremstyle{definition}
\newtheorem{remark}[theorem]{Remark}
\DeclareMathOperator\lk{\mathrm{lk}}
\DeclareMathOperator\st{\mathrm{st}}
\DeclareMathOperator\cost{\mathrm{cost}}
\DeclareMathOperator{\Hilb}{\mathrm{Hilb}}
\newcommand{\field}{\mathbbm{k}}
\newcommand{\NN}{{\mathbb N}}
\newcommand{\QQ}{{\mathbb Q}}
\newcommand{\CC}{{\mathbb C}}
\newcommand{\ZZ}{{\mathbb Z}}
\newcommand{\mideal}{\ensuremath{\mathfrak{m}}}
\newcommand{\Hom}{\ensuremath{\mathrm{Hom}}\hspace{1pt}}
\newcommand{\Ext}{\ensuremath{\mathrm{Ext}}\hspace{1pt}}
\title{Stanley--Reisner rings of simplicial complexes with a free action by an abelian group}
\author{Connor Sawaske\\
	\small Department of Mathematics\\[-0.8ex]
	\small University of Washington\\[-0.8ex]
	\small Seattle, WA 98195-4350, USA\\[-0.8ex]
	\small \texttt{sawaske@math.washington.edu}
}
\begin{document}
	%%%%%%%%%%%%%%%%%%%%%%%%%%%%%%%%%%%%%%%%%%%%
	%%%%%%%%%%%%%%%%%%%%%%%%%%%%%%%%%%%%%%%%%%%%
	\maketitle
	
	\begin{abstract}
		We consider simplicial complexes admitting a free action by an abelian group. Specifically, we establish a refinement of the classic result of Hochster  describing the local cohomology modules of the associated Stanley--Reisner ring, demonstrating that the topological structure of the free action extends to the algebraic setting. If the complex in question is also Buchsbaum, this new description allows for a specialization of Schenzel's calculation of the Hilbert series of some of the ring's Artinian reductions. In further application, we generalize to the Buchsbaum case the results of Stanley and Adin that provide a lower bound on the $h$-vector of a Cohen-Macaulay complex admitting a free action by a cyclic group of prime order.
	\end{abstract}

\section{Introduction}

Since the 1970's, an extensive dictionary translating the topological and combinatorial properties of a simplicial complex $\Delta$ into algebraic properties of the associated Stanley--Reisner ring $\field[\Delta]$ has been constructed. For instance, the local cohomology modules $H_\mideal^i(\field[\Delta])$ of $\field[\Delta]$ have a beautiful interpretation due to Hochster (and later Gr\"abe) as topological invariants of $\Delta$ (see \cite[Theorem 2]{Grabe}, \cite{Reisner}, or \cite[Section II.4]{St-96}). The following formulation of their results is the starting point of this paper (here $\cost_\Delta \sigma$ denotes the contrastar of a face $\sigma$ of $\Delta$, while $H^{i-1}(\Delta, \cost_\Delta \sigma)$ is the relative simplicial cohomology of the pair $(\Delta, \cost_\Delta \sigma)$ computed with coefficients in $\field$ and $H_\mideal^i(\field[\Delta])$ is the local cohomology module of $\field[\Delta]$):
\begin{theorem}\label{Reisner}Let $\Delta$ be a simplicial complex on vertex set $\{1, \ldots, n\}$ and let $\field$ be a field. Then
	\[
	H_\mideal^i(\field[\Delta])_{-j}\cong \bigoplus_{\mathclap{\substack{(u_1, \ldots, u_n)\in \NN^n,\,\,\, \sum u_k = j \\ \{k: u_k >0\}\in \Delta}}}H^{i-1}(\Delta, \cost_\Delta \{i: u_i>0\})
	\]
	as vector spaces over $\field$. 
\end{theorem}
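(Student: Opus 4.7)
The plan is to compute $H_\mideal^i(\field[\Delta])$ via the Čech complex $\check{C}^{\bullet}$ on $x_1, \ldots, x_n$. Since $\field[\Delta]$ carries a natural $\ZZ^n$-grading and the Čech differentials preserve it, $H_\mideal^i(\field[\Delta])$ is itself $\ZZ^n$-graded, so it suffices to analyze each multigraded strand of $\check{C}^{\bullet}$ separately.

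First I would make the multigraded structure explicit: $\check{C}^k = \bigoplus_{|T|=k} \field[\Delta]_{x_T}$, with $\field[\Delta]_{x_T}$ vanishing unless $T \in \Delta$. A direct calculation with monomials shows that for $\mathbf{a} = -\mathbf{u}$ with $\mathbf{u} \in \NN^n$, the piece $(\field[\Delta]_{x_T})_{-\mathbf{u}}$ is one-dimensional over $\field$ precisely when $T \in \Delta$ and $T \supseteq \supp(\mathbf{u})$, and is zero otherwise. A separate short argument shows that multidegrees $\mathbf{a}$ with some $a_i > 0$ yield exact strands (the relevant combinatorial complex is a cone), which reduces the problem to non-positive multidegrees.

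Fixing $\mathbf{u} \in \NN^n$ and setting $S := \{k : u_k > 0\}$, the degree-$(-\mathbf{u})$ strand of $\check{C}^{\bullet}$ becomes the $\field$-vector space complex whose $k$-th term has basis indexed by the faces $T \in \Delta$ of size $k$ with $T \supseteq S$, and whose differential sends the generator for $T$ to a signed sum of the generators for the cofaces $T \cup \{j\}$. I would then identify this combinatorial complex with a shift of the relative simplicial cochain complex $C^{\bullet}(\Delta, \cost_\Delta S; \field)$, whose relative $(k-1)$-cochains are exactly those $\field$-valued functions on $(k-1)$-faces that vanish on $\cost_\Delta S$, i.e., on faces not containing $S$. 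Taking $i$-th cohomology then yields $H^{i-1}(\Delta, \cost_\Delta S; \field)$, and summing over all $\mathbf{u} \in \NN^n$ with $\sum u_k = j$ and $\supp(\mathbf{u}) \in \Delta$ (the remaining multidegrees contributing nothing) produces the stated formula. The main technical hurdle is the sign-level compatibility between the Čech differential on localized monomials and the simplicial relative coboundary; once a total order on the vertex set is fixed and the natural monomial generators of $(\field[\Delta]_{x_T})_{-\mathbf{u}}$ are chosen accordingly, this reduces to a careful but routine bookkeeping exercise.
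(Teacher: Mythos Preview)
Your argument is correct: this is the standard \v{C}ech-complex proof of Hochster's formula, and the steps you outline (multigraded decomposition of $\check{C}^\bullet$, vanishing/acyclicity in multidegrees with a positive coordinate via a cone argument, and identification of the nonpositive strand at $-\mathbf{u}$ with the shifted relative cochain complex $C^{\bullet}(\Delta,\cost_\Delta s(\mathbf{u});\field)$) are all sound.

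Note, however, that the paper does \emph{not} supply its own proof of Theorem~\ref{Reisner}; it is quoted as a known result due to Hochster, Gr\"abe, and Reisner. Where the paper does engage with the argument is in the proof of the refinement Theorem~\ref{ReisnerAnalog}, and there it follows a genuinely different route: it identifies $H_\mideal^i(\field[\Delta])_{-j}$ with $\Ext_A^i(A/\mideal_{j+1},\field[\Delta])_{-j}$ and computes this Ext via the Koszul complex $K_\bullet$ on $x_1^{j+1},\ldots,x_n^{j+1}$, then builds an explicit chain map $\psi$ (after Miyazaki) from $\bigoplus_U C^{\bullet-1}(\Delta,\cost_\Delta s(U))$ into $\Hom_A(K_\bullet,\field[\Delta])_{-j}$. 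Your \v{C}ech approach is arguably more direct and avoids the artificial truncation at $\mideal_{j+1}$, but the Koszul/Ext model has a concrete advantage for the paper's purposes: the homomorphisms $f_{\sigma,\tau}^U$ give explicit cocycle representatives on which the $G$-action can be tracked by hand, which is exactly what is needed to verify the character inversion $\chi\mapsto\chi^{-1}$ in the refinement. If you wanted to recover Theorem~\ref{ReisnerAnalog} from your \v{C}ech computation instead, you would need to check that the $G$-action on monomials in the localizations $\field[\Delta]_{x_T}$ matches (up to the same inversion) the permutation action on the summands $H^{i-1}(\Delta,\cost_\Delta s(U))$; this is also doable, but the bookkeeping is a bit different.
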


On the other hand, the effect on the Stanley--Reisner ring of a simplicial complex being endowed with a group action has also been studied in some depth (see \cite[Section III.8]{St-96} and \cite{AdinThesis}). Our primary goal is to bring these two topics together by studying the additional structure on $H_\mideal^i(\field[\Delta])$ that appears when $\Delta$ admits a group action. As it turns out, the topological invariants introduced by the group action dictate a more detailed description of $H_\mideal^i(\field[\Delta])$, providing our main result.

Our secondary goal is to apply this new decomposition of $H_\mideal^i(\field[\Delta])$ to extend two separate classes of previously-existing results describing the $h$-vectors (perhaps the most widely-recognized and studied combinatorial invariants) of certain simplicial complexes. One of these classes of results examines Buchsbaum simplicial complexes: using Theorem \ref{Reisner}, Schenzel was able to calculate the Hilbert series of the quotient of $\field[\Delta]$ by a linear system of parameters in the case that $\Delta$ is Buchsbaum. The culmination of this line of study was the following theorem appearing in \cite{Schenzel}, providing an algebraic interpretation of the $h$-vector of $\Delta$.
\begin{theorem}[Schenzel]\label{Schenzel}Let $\Delta$ be a $(d-1)$-dimensional Buchsbaum simplicial complex, and let $\Theta$ be a linear system of parameters for $\Delta$. Then
	\[
	\dim_\field \left(\field[\Delta]/\Theta\field[\Delta]\right)_i= h_i(\Delta)+{d\choose i}\sum_{j=0}^{i-1}(-1)^{i-j-1}\beta_{j-1}(\Delta),
	\]
where $\beta_{j-1}(\Delta)$ denotes the reduced simplicial Betti number of $\Delta$ computed over $\field$.
\end{theorem}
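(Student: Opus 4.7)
Set $R := \field[\Delta]$ and $R_k := R/(\theta_1,\ldots,\theta_k)R$, so that $R_0 = R$ and $R_d = R/\Theta R$. The plan is to compute $H(R_d,t)$ by iteratively reducing by the $\theta_k$. The Buchsbaum hypothesis on $\Delta$ makes each $R_k$ a Buchsbaum module of Krull dimension $d - k$. Two consequences will drive the argument: first, $\mideal \cdot H^0_\mideal(R_k) = 0$, so $\theta_{k+1}$ annihilates $H^0_\mideal(R_k)$; second, $\theta_{k+1}$ acts as a nonzerodivisor on $R_k / H^0_\mideal(R_k)$. Together these give $(0 :_{R_k} \theta_{k+1}) = H^0_\mideal(R_k)$, yielding the four-term exact sequence
\[
0 \to H^0_\mideal(R_{k-1})(-1) \to R_{k-1}(-1) \xrightarrow{\theta_k} R_{k-1} \to R_k \to 0
\]
and hence the recursion $H(R_k,t) = (1-t)\,H(R_{k-1},t) + t \cdot H\bigl(H^0_\mideal(R_{k-1}),t\bigr)$. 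Iterating gives $H(R_d,t) = (1-t)^d H(R,t) + t\sum_{k=1}^d (1-t)^{d-k} H\bigl(H^0_\mideal(R_{k-1}),t\bigr)$, and the first summand is by definition $\sum_i h_i(\Delta)\,t^i$.

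\textbf{Computing $H^0_\mideal(R_{k-1})$.} The next step is to extract $H^0_\mideal(R_{k-1})$ from the long exact sequence in local cohomology attached to the displayed short exact sequence. Since $\theta_k$ acts as zero on every $H^j_\mideal(R_{k-1})$ with $j < \dim R_{k-1}$ (yet another consequence of the Buchsbaum property), the long exact sequence breaks into short exact sequences that split as graded $\field$-vector spaces, producing
\[
H^i_\mideal(R_k) \cong H^i_\mideal(R_{k-1}) \oplus H^{i+1}_\mideal(R_{k-1})(-1)
\]
for $i+1 < \dim R_{k-1}$. Iterating from $R_0 = R$ yields $H^0_\mideal(R_{k-1}) \cong \bigoplus_{j=0}^{k-1} H^j_\mideal(R)(-j)^{\oplus \binom{k-1}{j}}$. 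Now Theorem \ref{Reisner} simplifies dramatically for Buchsbaum $\field[\Delta]$: the link of every nonempty face is Cohen--Macaulay, so each summand indexed by $\sigma \ne \emptyset$ vanishes in degree $j < d$. Only the $\sigma = \emptyset$ term survives, giving $H^j_\mideal(R)$ concentrated in degree $0$ with $\dim_\field H^j_\mideal(R)_0 = \beta_{j-1}(\Delta)$ for $j < d$. Substituting yields $H\bigl(H^0_\mideal(R_{k-1}),t\bigr) = \sum_{j=0}^{k-1} \binom{k-1}{j}\beta_{j-1}(\Delta)\,t^j$.

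\textbf{Assembly.} Feeding this into the iterated recursion, the coefficient of $t^i$ in $H(R_d,t) - \sum_i h_i(\Delta)\,t^i$ becomes
\[
\sum_{j=0}^{i-1} (-1)^{i-j-1}\beta_{j-1}(\Delta) \sum_{k=j+1}^{d} \binom{k-1}{j}\binom{d-k}{i-j-1}.
\]
The inner sum is the classical Vandermonde-type identity $\sum_{k=0}^{d-1}\binom{k}{j}\binom{d-1-k}{i-j-1} = \binom{d}{i}$, so the correction collapses to $\binom{d}{i}\sum_{j=0}^{i-1}(-1)^{i-j-1}\beta_{j-1}(\Delta)$, matching the statement.

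\textbf{Main obstacle.} The technical heart is the local cohomology computation in the second paragraph: establishing the iterated splitting and verifying that it persists throughout the $d$-step iteration, in particular near the top of the tower where the Krull dimension of $R_k$ shrinks to zero and the top local cohomology module could in principle disturb the splitting. Once that decomposition is secured, the appeal to Theorem \ref{Reisner} and the binomial bookkeeping in the final step are standard.
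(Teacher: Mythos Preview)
Your proposal is correct and follows essentially the same route as the paper. The paper does not prove Theorem~\ref{Schenzel} directly (it is quoted as Schenzel's result), but its proof of the refinement Theorem~\ref{SchenzelAnalog} specializes to your argument upon setting $t=1$: the same short exact sequences $0\to H^0_\mideal(R_{k-1})\to R_{k-1}\to R_{k-1}\to R_k\to 0$, the same iterated Hilbert-series identity, the same decomposition $H^0_\mideal(R_{k-1})\cong\bigoplus_{j}\binom{k-1}{j}H^j_\mideal(R)(-j)$ (which the paper imports from \cite[Proposition~II.4.14']{StVo} rather than deriving via the long exact sequence as you do), the same appeal to Hochster's formula for Buchsbaum complexes, and the same Vandermonde collapse. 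Your flagged ``main obstacle'' does not in fact materialize: the iteration computing $H^0_\mideal(R_{k-1})$ only ever invokes splittings $H^i_\mideal(R_\ell)\cong H^i_\mideal(R_{\ell-1})\oplus H^{i+1}_\mideal(R_{\ell-1})(-1)$ with $i+\ell\le k-1\le d-1$, so the top local cohomology module is never reached.
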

Since any dimension must be non-negative, the theorem provides lower bounds for the entries of $h(\Delta)$ in terms of some topological invariants of $\Delta$. More recently, these bounds have been further lowered by the use of socles (\cite{NS-Socle}) and the sigma module $\Sigma(\Theta; \field[\Delta])$ (originally introduced by Goto in \cite{Goto}, and whose definition is deferred to Section \ref{sect:preliminaries}).
\begin{theorem}[Murai--Novik--Yoshida]\label{NS-Socle}Let $\Delta$ be a $(d-1)$-dimensional Buchsbaum simplicial complex and let $\Theta$ be a linear system of parameters for $\Delta$. Then
	\[
	\dim_\field \left(\field[\Delta]/\Sigma(\Theta; \field[\Delta])\right)_i= h_i(\Delta)+{d\choose i}\sum_{j=0}^{i}(-1)^{i-j-1}\beta_{j-1}(\Delta).
	\]
\end{theorem}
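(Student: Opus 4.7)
The plan is to deduce the theorem from Schenzel's (Theorem \ref{Schenzel}) via the short exact sequence
\begin{equation*}
0 \to \Sigma(\Theta;\field[\Delta])/\Theta\field[\Delta] \to \field[\Delta]/\Theta\field[\Delta] \to \field[\Delta]/\Sigma(\Theta;\field[\Delta]) \to 0,
\end{equation*}
which comes from the inclusion $\Theta\field[\Delta] \subseteq \Sigma(\Theta;\field[\Delta])$. The formula claimed in the theorem differs from Schenzel's by exactly the $j = i$ term, which contributes $-\binom{d}{i}\beta_{i-1}(\Delta)$. Hence the theorem is equivalent to proving the dimension identity
\begin{equation*}
\dim_\field \bigl(\Sigma(\Theta;\field[\Delta])/\Theta\field[\Delta]\bigr)_i = \binom{d}{i}\beta_{i-1}(\Delta),
\end{equation*}
which becomes the real target.

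To establish this identity, I would invoke the Buchsbaum hypothesis, under which the maximal ideal $\mideal$ annihilates $H^j_\mideal(\field[\Delta])$ for every $j < d$; consequently each $\theta_k$ acts as zero on these local cohomology modules. This rigidity forces the Koszul complex on $\Theta$ with coefficients in $\field[\Delta]$ to behave very cleanly in the local-cohomology spectral sequence, and a standard calculation (originating in Goto's study of the sigma module and refined in the graded setting by Novik--Swartz) yields
\begin{equation*}
\dim_\field \bigl(\Sigma(\Theta;\field[\Delta])/\Theta\field[\Delta]\bigr)_i = \binom{d}{i}\dim_\field H^i_\mideal(\field[\Delta])_0,
\end{equation*}
with the binomial coefficient arising from the $i$-element subsets of $\{\theta_1,\dots,\theta_d\}$ that contribute to the $i$-th graded component of the quotient. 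To finish, I apply Hochster's Theorem \ref{Reisner} at $j = 0$: the only summand with $\sum u_k = 0$ is $(u_1,\dots,u_n) = (0,\dots,0)$, for which $\{k : u_k > 0\} = \emptyset$. This produces $H^i_\mideal(\field[\Delta])_0 \cong H^{i-1}(\Delta, \cost_\Delta \emptyset)$, a group whose dimension is precisely the reduced Betti number $\beta_{i-1}(\Delta)$.

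The hard part is the intermediate formula relating $\Sigma(\Theta;\field[\Delta])/\Theta\field[\Delta]$ to local cohomology with the correct binomial factor. Using the explicit description $\Sigma(\Theta;\field[\Delta]) = \sum_{k=1}^d (\theta_1,\dots,\widehat{\theta_k},\dots,\theta_d)\field[\Delta] :_{\field[\Delta]} \theta_k$, one has to track graded pieces through a partial Koszul complex and verify that the overlaps among the $d$ colon ideals cancel correctly so that one ends up with $\binom{d}{i}$ independent copies of $H^i_\mideal(\field[\Delta])_0$ rather than some smaller or larger count. The Buchsbaum hypothesis is what makes the relevant Koszul pieces split off cleanly, via the vanishing $\mideal \cdot H^{<d}_\mideal(\field[\Delta]) = 0$; abandoning it would force a much more delicate spectral-sequence argument and break the clean binomial identity. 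Once this bookkeeping is carried out, the proof ties together with no further subtlety.
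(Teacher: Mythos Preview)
Your proposal is correct and follows essentially the same route that the paper (implicitly) takes: the paper does not give an independent proof of this theorem---it is quoted as a result of Murai--Novik--Yoshida (restated later as Theorem~\ref{BBMD})---but the key isomorphism you invoke,
\[
\Sigma(\Theta;\field[\Delta])/\Theta\field[\Delta]\ \cong\ \bigoplus_{i=0}^{d-1}\Bigl(\bigoplus_{{d\choose i}}H_\mideal^{i}(\field[\Delta])[-i]\Bigr),
\]
is exactly Proposition~\ref{sigmaModuleProp} (specialized to the ungraded setting), and combining it with Schenzel's formula and Hochster's identification $H_\mideal^i(\field[\Delta])_0\cong\widetilde{H}^{i-1}(\Delta)$ is precisely how the paper derives the analogous refined statement in Theorem~\ref{SigmaModuleThm}. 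Your acknowledgment that the ``hard part'' is establishing that isomorphism with the correct binomial multiplicity is apt; the paper likewise defers that step to \cite{BBMD}.
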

Furthermore, the symmetry appearing in the $h$-vector described by Gr\"{a}be in \cite{GrabeDS} was reformulated by Novik and Murai in \cite[Proposition 1.1]{FaceNumbers}, and using Theorem \ref{NS-Socle} it may be described as follows:
\begin{theorem}[Murai--Novik--Yoshida]\label{Duality}Let $\Delta$ be a $(d-1)$-dimensional Buchsbaum simplicial complex and let $\Theta$ be a linear system of parameters for $\Delta$. Then
	\[
	\dim_\field \left(\field[\Delta]/\Sigma(\Theta; \field[\Delta])\right)_i=\dim_\field \left(\field[\Delta]/\Sigma(\Theta; \field[\Delta])\right)_{d-i}
	\]
	for $i=0, \ldots, d$.
\end{theorem}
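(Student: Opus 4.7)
The plan is to combine the Hilbert function formula of Theorem \ref{NS-Socle} with Gr\"{a}be's generalized Dehn--Sommerville relations for Buchsbaum complexes. Applying Theorem \ref{NS-Socle} to both sides of the claimed equality reduces Theorem \ref{Duality} to the purely numerical identity
\[
h_i(\Delta) + \binom{d}{i}\sum_{j=0}^{i}(-1)^{i-j-1}\beta_{j-1}(\Delta) = h_{d-i}(\Delta) + \binom{d}{d-i}\sum_{j=0}^{d-i}(-1)^{d-i-j-1}\beta_{j-1}(\Delta)
\]
for every $i \in \{0, \ldots, d\}$. Using the symmetry $\binom{d}{i} = \binom{d}{d-i}$, this is equivalent to
\[
h_{d-i}(\Delta) - h_i(\Delta) = \binom{d}{i}\left(\sum_{j=0}^{i}(-1)^{i-j-1}\beta_{j-1}(\Delta) - \sum_{j=0}^{d-i}(-1)^{d-i-j-1}\beta_{j-1}(\Delta)\right),
\]
so the entire claim becomes a statement relating the antisymmetric part of the $h$-vector to an explicit alternating combination of reduced Betti numbers.

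I would next invoke Gr\"{a}be's generalized Dehn--Sommerville relations for Buchsbaum complexes from \cite{GrabeDS}, in the reformulation of Murai and Novik \cite[Proposition 1.1]{FaceNumbers}. These relations give an identity for $h_{d-i}(\Delta) - h_i(\Delta)$ in terms of $\binom{d}{i}$ times an alternating sum of reduced Betti numbers of $\Delta$. The desired equality then follows once one checks that the Betti combination appearing in Gr\"{a}be's formula agrees with the one appearing above. In other words, Theorem \ref{NS-Socle} provides an algebraic object ($\field[\Delta]/\Sigma(\Theta;\field[\Delta])$) whose numerical invariants package together $h$-vector entries and Betti numbers in precisely the right way for Gr\"{a}be's combinatorial symmetry to manifest as Hilbert function symmetry.

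The main obstacle is the bookkeeping needed to match signs, summation ranges, and boundary terms between the two formulations. One efficient approach is to reindex both sums to run over a common index set, observe the pairwise cancellation of overlapping terms, and verify that the residual expression coincides with Gr\"{a}be's. Particular care is needed at the $j = 0$ endpoint (where the term $\beta_{-1}(\Delta)$ vanishes for nonempty $\Delta$) and in the symmetric pairing of indices about $i = d/2$, where the cases $i \leq d/2$ and $i \geq d/2$ mirror each other under the binomial symmetry. Once these manipulations are carried through, the asserted symmetry of $\dim_\field\bigl(\field[\Delta]/\Sigma(\Theta; \field[\Delta])\bigr)_i$ follows immediately.
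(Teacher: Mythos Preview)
Your approach matches exactly what the paper indicates: the sentence preceding Theorem~\ref{Duality} says the result is obtained by combining Gr\"abe's symmetry from \cite{GrabeDS} (as reformulated in \cite[Proposition~1.1]{FaceNumbers}) with Theorem~\ref{NS-Socle}, and the paper gives no further argument beyond that.

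That said, there is a genuine gap in the stated generality that affects both the paper's statement and your proof. Gr\"abe's generalized Dehn--Sommerville relations do \emph{not} hold for arbitrary Buchsbaum complexes; they require $\Delta$ to be an orientable $\field$-homology manifold (or at least to satisfy an Eulerian-type condition on links). Theorem~\ref{Duality} is in fact false as stated: take $\Delta$ to be two disjoint edges, a $1$-dimensional Buchsbaum complex with $h$-vector $(1,2,-1)$ and Betti numbers $\beta_{-1}=\beta_1=0$, $\beta_0=1$; the formula of Theorem~\ref{NS-Socle} then gives dimensions $1,0,0$, which is not symmetric. So the step ``invoke Gr\"abe's relations for Buchsbaum complexes'' cannot succeed at the asserted level of generality. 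Under the correct hypothesis (connected orientable $\field$-homology manifold, as in Theorem~\ref{Gorenstein}) your argument goes through; alternatively one can bypass the numerical bookkeeping entirely by using the Gorenstein property of $\field[\Delta]/\Sigma(\Theta;\field[\Delta])$ and its perfect pairing, as the paper does in proving Theorem~\ref{DSTheorem2}.
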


The second class of results that we will enlarge deals with the $h$-vectors of complexes admitting specific group actions by $\ZZ/p\ZZ$ in the Cohen--Macaulay case. In particular, the following two theorems (\cite[Theorem 3.2]{St-87} and \cite[Theorem 3.3]{AdinThesis}, respectively) provide impressive bounds on the $h$-vector of such a complex and are ripe for extensions to more general complexes:
\begin{theorem}[Stanley]\label{StanleyCMVeryFree}
	Let $\Delta$ be a $(d-1)$-dimensional Cohen-Macaulay (over $\CC$) simplicial complex admitting a very free action by $\ZZ/p\ZZ$ with $p$ a prime. Then $
	h_i(\Delta)\ge {d\choose i}$ if $i$ is even and $h_i(\Delta)\ge (p-1){d\choose i}$ if $i$ is odd.
\end{theorem}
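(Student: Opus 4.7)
The plan is to exploit the $\ZZ/p\ZZ$-action on $\CC[\Delta]$ by choosing an invariant linear system of parameters and decomposing the resulting Artinian quotient into isotypic components, reading off the desired inequalities on $h_i(\Delta)$ from the non-negativity of the dimensions of these components. Throughout, let $g$ generate $\ZZ/p\ZZ$ and let $\zeta\in\CC$ be a primitive $p$-th root of unity.

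First I would produce an invariant linear system of parameters $\Theta=(\theta_1,\ldots,\theta_d)$. The space of $g$-invariants in $\CC[\Delta]_1$ is spanned by the $n/p$ orbit sums $\sum_{v\in O}x_v$. Since $\Delta$ is Cohen--Macaulay it is pure of dimension $d-1$, and since the action is very free each facet's $d$ vertices belong to $d$ distinct orbits. A standard genericity argument, using the fact that non-vanishing of the restriction determinant at each facet is a Zariski-open condition on the coefficients of the orbit sums, then produces an invariant lsop.

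Next I would compute the equivariant Hilbert series $F(\CC[\Delta];t,g^j):=\sum_i \mathrm{trace}(g^j\mid\CC[\Delta]_i)\,t^i$ for $j=1,\ldots,p-1$. The element $g^j$ permutes the monomial basis of $\CC[\Delta]$, and a monomial is fixed precisely when its support is a $g^j$-invariant face with exponents constant on orbits. By very-freeness (applied to $g^j$, which also generates $\ZZ/p\ZZ$) only the empty face is invariant, so the only fixed monomial is the constant $1$, giving $F(\CC[\Delta];t,g^j)=1$. On the other hand, Cohen--Macaulayness makes $\CC[\Delta]$ a free $\CC[\Theta]$-module, and a $g$-equivariant splitting delivers the factorization
\[
F(\CC[\Delta];t,g^j)=F(\CC[\Theta];t,g^j)\cdot F(\CC[\Delta]/\Theta;t,g^j).
\]
Since $\Theta$ is invariant, $F(\CC[\Theta];t,g^j)=(1-t)^{-d}$ for every $j$, and so $F(\CC[\Delta]/\Theta;t,g^j)=(1-t)^d$ for $j\not\equiv 0\pmod p$, while $F(\CC[\Delta]/\Theta;t,1)=\sum_i h_i(\Delta)\,t^i$.

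Finally, character averaging over the $p$ characters $\chi_0,\ldots,\chi_{p-1}$ of $\ZZ/p\ZZ$ yields
\[
p\cdot\dim\bigl(\CC[\Delta]/\Theta\bigr)_i^{\chi_m}=h_i(\Delta)+(-1)^i\binom{d}{i}\sum_{j=1}^{p-1}\zeta^{-mj},
\]
where the trailing sum equals $p-1$ if $m=0$ and $-1$ otherwise. Non-negativity of the left-hand side then gives $h_i(\Delta)\ge\binom{d}{i}$ for even $i$ by choosing any $m\ne 0$, and $h_i(\Delta)\ge(p-1)\binom{d}{i}$ for odd $i$ by choosing $m=0$. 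The main obstacle is the first step: confirming existence of an invariant lsop, where the very-freeness hypothesis enters crucially to guarantee that the orbit-sum restrictions to facets can be made to have full rank simultaneously; the remaining computations then follow from a clean Molien-style manipulation once Cohen--Macaulay freeness is in hand.
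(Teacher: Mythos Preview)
Your argument is correct and is essentially Stanley's original proof, which is precisely what the paper cites for this statement rather than proving it anew. The paper quotes Theorem~\ref{StanleyCMVeryFree} from \cite{St-87} as background; its own contribution is the Buchsbaum generalization (Theorems~\ref{SchenzelAnalog} and~\ref{SigmaModuleThm}), whose specialization to the Cohen--Macaulay case with $m=0$ yields exactly the isotypic dimension formula
\[
p\cdot\dim_\field\field(\Delta;\Theta)_i^k \;=\; h_i(\Delta)+(-1)^i\binom{d}{i}\bigl((p-1)\delta_{k,0}-\delta_{k\neq 0}\bigr)
\]
that you obtain via the Molien computation. The only methodological difference is that you reach this formula directly from the trace identity $F(\CC[\Delta];t,g^j)=1$ for $j\not\equiv 0$ together with freeness of $\CC[\Delta]$ over $\CC[\Theta]$, whereas the paper arrives at the same endpoint by first establishing the full $(\ZZ\times G)$-graded Hilbert series of $\field[\Delta]$ (Theorem~\ref{SRHilb}) and then tracking the quotient by $\Theta$ through the Schenzel-type exact sequences; in the Cohen--Macaulay case the local-cohomology correction terms vanish and the two computations collapse to the same one.
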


\begin{theorem}[Adin]\label{AdinCMFree}
	Let $\Delta$ be a $(d-1)$-dimensional Cohen-Macaulay (over $\CC$) simplicial complex with a free action of $\ZZ/p\ZZ$ with $p$ a prime such that $d$ is divisible by $p-1$. Then
	\[
	\sum_{i=0}^dh_i(\Delta)\lambda^i\ge (1+\lambda+\cdots +\lambda^{p-1})^{d/(p-1)},
	\]
	where the inequality holds coefficient-wise.
\end{theorem}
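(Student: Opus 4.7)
The plan is to construct a $G$-equivariant linear system of parameters (lsop) adapted to the character structure of $G = \ZZ/p\ZZ$, and to then extract the lower bound via a Molien-type character computation. Fix a generator $\sigma$ of $G$, let $\chi$ denote the character $\sigma \mapsto \zeta := e^{2\pi i/p}$, and set $R := \CC[\Delta]$. Since the action on the vertex set is free, the degree-one piece $R_1$ decomposes as $n/p$ copies of the regular representation of $G$, so each non-trivial isotypic component $R_1^{(\chi^k)}$ has dimension $n/p$. Setting $m := d/(p-1)$, the first step is to construct an lsop $\Theta = (\theta_1, \ldots, \theta_d)$ in which exactly $m$ of the parameters lie in each non-trivial isotypic component $R_1^{(\chi^k)}$, $k = 1, \ldots, p-1$. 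I expect this to be the most delicate step: it asks that generic selections from the prescribed isotypic components still satisfy the Stanley--Reisner criterion (the restriction of $\Theta$ to each facet spans $\CC^d$), and it is precisely here that freeness on all of $\Delta$ (not merely on vertices) and the divisibility hypothesis $(p-1)\mid d$ must combine in a dimension count.

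Granting such a $\Theta$, set $A := R/\Theta R$. Cohen--Macaulayness of $\Delta$ makes $R$ a free $\CC[\Theta]$-module, and averaging over $G$ produces a $G$-equivariant splitting $R \cong A \otimes_\CC \CC[\Theta]$ of graded $G$-modules. This gives, for every $g \in G$, the multiplicativity of the equivariant Hilbert series
$$\mathrm{tr}(g, R; \lambda) \;=\; \mathrm{tr}(g, A; \lambda)\cdot \mathrm{tr}(g, \CC[\Theta]; \lambda), \qquad \mathrm{tr}(g, M; \lambda) := \sum_{i\geq 0}\mathrm{tr}(g \mid M_i)\,\lambda^i.$$
The key observation is that for every $g \neq 1$, freeness of the action on $\Delta$ forces $\mathrm{tr}(g, R; \lambda) = 1$: a $g$-fixed monomial of $R$ has support a union of full $g$-orbits, so, if non-empty, that support is a $g$-stable face, contradicting freeness. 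On the other hand, with $g = \sigma^j$ and $j \neq 0$,
$$\mathrm{tr}(g, \CC[\Theta]; \lambda) \;=\; \prod_{k=1}^{p-1}\frac{1}{(1 - \zeta^{jk}\lambda)^m} \;=\; \frac{1}{(1 + \lambda + \cdots + \lambda^{p-1})^m},$$
using that $k \mapsto jk$ is a bijection on $\{1,\ldots, p-1\}$ modulo $p$ and that $\prod_{k=1}^{p-1}(1 - \zeta^k\lambda) = 1 + \lambda + \cdots + \lambda^{p-1}$. Combining the two calculations gives $\mathrm{tr}(g, A; \lambda) = (1 + \lambda + \cdots + \lambda^{p-1})^m$ for every non-identity $g \in G$.

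Finally, by the orthogonality of characters, the dimension of the $\chi^k$-isotypic component of $A_i$ for any non-trivial $\chi^k$ equals
$$\dim_\CC A_i^{(\chi^k)} \;=\; \frac{1}{p}\sum_{g \in G}\chi^{-k}(g)\,\mathrm{tr}(g \mid A_i) \;=\; \frac{1}{p}\Bigl(h_i(\Delta) \;-\; \bigl[(1+\lambda+\cdots+\lambda^{p-1})^m\bigr]_i\Bigr),$$
using $\sum_{j=0}^{p-1}\zeta^{-jk} = 0$ for $k \not\equiv 0$ together with $\dim_\CC A_i = h_i(\Delta)$ (Cohen--Macaulayness). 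Non-negativity of these dimensions yields the claimed coefficient-wise inequality $h_i(\Delta) \geq \bigl[(1+\lambda+\cdots+\lambda^{p-1})^m\bigr]_i$ for every $i$. The character calculation and the orthogonality step are formal; the substantive obstacle to this plan is, as noted, the construction of the character-equivariant lsop in the first step.
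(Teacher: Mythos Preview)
The paper does not prove Theorem~\ref{AdinCMFree}; it is quoted from Adin's thesis as background. Your argument is essentially Adin's original proof: choose a $(\ZZ\times G)$-homogeneous l.s.o.p.\ with $m=d/(p-1)$ parameters in each non-trivial isotypic component, use Cohen--Macaulayness to factor the equivariant Hilbert series, compute $\operatorname{tr}(g,R;\lambda)=1$ for $g\neq 1$ from freeness, and conclude via character orthogonality. The character and trace computations you give are correct.

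The one substantive step you flag but do not carry out---the existence of an l.s.o.p.\ with the prescribed distribution across isotypic components when the action is merely free (not very free)---is exactly the non-trivial input. The paper itself does not supply this construction either; Remark~\ref{freeactionlsop} points to Adin's thesis for it, noting that it may require passing to a field extension of $\CC$. So your proposal is correct as an outline and matches the source argument, with the same gap (the l.s.o.p.\ construction) left to the cited reference.
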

In summary, our new results are the following (as much notation must be introduced before explicit statements may be provided, we present here a brief overview):
\begin{itemize}\itemsep0em
	\item \textit{A new version of Hochster and Gr\"{a}be's theorem:} We examine how a group action leads to a special decomposition of $H_\mideal^i(\field[\Delta])$, and we demonstrate how the isomorphism of Theorem \ref{Reisner} respects this decomposition (Theorem \ref{ReisnerAnalog}). Later, for the $\ZZ/p\ZZ$ case, we encounter a piece of the local cohomology of $\field[\Delta]$ that corresponds to the singular cohomology of the quotient $|\Delta|/G$.
	\item \textit{Applications to $h$-vectors:} The dimension calculations of Theorems \ref{Schenzel} and \ref{NS-Socle} are refined and specialized to the case of a Buchsbaum complex admitting a free action by a cyclic group of prime order (Theorems \ref{SchenzelAnalog} and \ref{SigmaModuleThm}). Using this refinement, Theorems \ref{StanleyCMVeryFree} and \ref{AdinCMFree} are generalized to the setting of Buchsbaum complexes; in some cases, the expressions are identical (Section \ref{Inequalities}). Lastly, in Theorems \ref{DSTheorem} and \ref{DSTheorem2}, we exhibit a symmetry in the dimensions of finely graded pieces of some Artinian reductions of Stanley--Reisner rings of certain orientable homology manifolds admitting a free group action by a cyclic group of prime order in the flavor of Theorem \ref{Duality}.
\end{itemize}
The paper is organized as follows. In Section \ref{sect:preliminaries}, we review notation and some classical results. In Section \ref{ReisnerSection} we study the local cohomology modules $H_\mideal^i(\field[\Delta])$ and provide our main new result. Section \ref{Applications} is devoted to calculating a finely-graded Hilbert series of an Artinian reduction of the Stanley--Reisner ring $\field[\Delta]$, providing many applications of our main theorem. We close with comments and questions in Section \ref{comments}.

%%%%%%%%%%%%%%%%%%%%%%%%%%%
%Preliminaries
%%%%%%%%%%%%%%%%%%%%%%%%%%%

\section{Preliminaries} \label{sect:preliminaries}

%%%%%%%%%%%%
%Combinatorics and Geometry
%%%%%%%%%%%%%

\subsection{Combinatorics and topology}

A \textbf{simplicial complex} $\Delta$ on vertex set $[n]:=\{1, \ldots, n\}$ is a collection of subsets of $[n]$ that is closed under inclusion. The elements of $\Delta$ are called \textbf{faces}, and the maximal faces (with respect to inclusion) are \textbf{facets}. The \textbf{dimension} of a face $\sigma$ is defined by $\dim \sigma :=|\sigma|-1$, and the \textbf{dimension} of $\Delta$ is defined by $\dim \Delta := \max\{\dim \sigma: \sigma\in \Delta\}$. The faces of dimension zero are called \textbf{vertices}. If all facets of $\Delta$ have the same dimension, then we say that $\Delta$ is \textbf{pure}.

Given a face $\sigma$ of $\Delta$, we define the \textbf{star} of $\sigma$ in $\Delta$ by
\[
\st_\Delta \sigma :=\{\tau\in \Delta: \sigma\cup \tau\in \Delta\}.
\]
and the \textbf{contrastar} of $\sigma$ in $\Delta$ by
\[
\cost_\Delta \sigma := \{\tau\in \Delta: \sigma\not\subset\tau\}.
\]
Similarly, the $\textbf{link}$ of $\sigma$ in $\Delta$ is
\[
\lk_\Delta \sigma := \{\tau\in \Delta: \sigma\cup \tau \in \Delta, \sigma\cap \tau = \emptyset\}.
\]
We define the $f$-\textbf{vector} $f(\Delta)$ of a $(d-1)$-dimensional simplicial complex $\Delta$ by $f(\Delta)=(f_{-1}(\Delta), f_0(\Delta), \ldots, f_{d-1}(\Delta))$, where $f_i(\Delta)$ is the number of $i$-dimensional faces of $\Delta$. The $h$-\textbf{vector} $h(\Delta)$ is then defined by $h(\Delta)=(h_0(\Delta), h_1(\Delta), \ldots, h_d(\Delta))$, where
\[
h_i(\Delta)=\sum_{j=0}^i(-1)^{i-j}{d-j\choose i-j}f_{j-1}(\Delta).
\]

Let $\field$ be a field, and let $\tilde{H}^i(\Delta)$ be the $i$-th reduced simplicial cohomology group of $\Delta$ with coefficients in $\field$. If $\Gamma$ is a subcomplex of $\Delta$, then $H^i(\Delta, \Gamma)$ is the $i$-th relative cohomology group of the pair $(\Delta, \Gamma)$ with coefficients in $\field$. In the case that $\Gamma=\{\emptyset\}$, this is the same as the reduced cohomology group $\tilde{H}^i(\Delta)$. Denote the $i$\textbf{-th (reduced) Betti number} of $\Delta$ over $\field$ by
\[
\beta_i(\Delta):=\dim_\field \widetilde{H}^i(\Delta).
\]

We call a complex $\Delta$ \textbf{Cohen-Macaulay} if $\beta_i(\lk_\Delta \sigma)=0$ for all faces $\sigma$ and all $i<\dim \lk_\Delta \sigma$. Similarly, we call a complex \textbf{Buchsbaum} if $\Delta$ is pure and $\beta_i(\lk_\Delta \sigma)=0$ for all faces $\sigma\not=\emptyset$ and all $i<\dim \lk_\Delta \sigma$. If the link of each non-empty face $\sigma$ of $\Delta$ has the homology of a $(d-|\sigma|-1)$-sphere, then we say that $\Delta$ is a $\field$\textbf{-homology manifold}; furthermore, if $\beta_{\dim\Delta}(\Delta)$ is equal to the number of connected components of $\Delta$, then we say that $\Delta$ is \textbf{orientable}.

Now let $G$ be a finite group and suppose $G$ acts on a simplicial complex $\Delta$ (in particular, each $g\in G$ acts as a simplicial isomorphism on $\Delta$). We say that $G$ acts \textbf{freely} if $g\cdot \sigma \not= \sigma$ for all faces $\sigma\not=\emptyset$ and all non-identity elements $g\in G$. Similarly, $G$ acts \textbf{very freely} if for every face $\sigma$ and every non-identity element $g\in G$,
\[
(\st_\Delta \sigma) \cap (\st_\Delta (g\cdot\sigma)) = \{\emptyset\}.
\]
 In the case that $G=\ZZ/2\ZZ$, free and very free actions are equivalent and we call $\Delta$ \textbf{centrally-symmetric}. When extending a free group action by $G$ to the geometric realization $|\Delta|$ of $\Delta$, the action obtained is a covering space action (see \cite[Section 1.3]{Hatcher}).

Lastly, given a vector $U=(i_1, \ldots, i_n)\in \NN^n$, let $s(U)=\{j:i_j>0\}$ be the support of $U$ and let $|U|=\sum_{j=1}^n i_j$ be the $L^1$-norm of $U$. Denote
\begin{equation}\label{TjDefinition}
T(\Delta)_j=\{U\in \NN^n:\text{$s(U)\in \Delta$ and $|U|=j$}\}.
\end{equation}
%%%%%%%%%%%%%%%%%%%
%Algebra
%%%%%%%%%%%%%%
\subsection{Modules and group actions}
Assume from now on that the field $\field$ is an extension of $\CC$. If $G$ is a finite group and $N$ is a $\field[G]$-module, then $N$ admits a decomposition into isotypic components according to the action of $G$. That is,
\[
N=\bigoplus_{\chi} N^\chi
\]
where the sum is taken over all irreducible characters $\chi$ of $G$ and
\[
N^\chi := \{ n\in N: g\cdot n = \chi(g) n\text{ for all $g\in G$}\}.
\]
Of vital importance will be the consideration of each graded piece
\[
\bigoplus_{U\in T(\Delta)_j}H^{i-1}(\Delta, \cost_\Delta s(U))
\]
of $H_\mideal^i(\field[\Delta])$ as a $\field[G]$-module when $\Delta$ admits a group action by $G$ (each direct sum of reduced simplicial cohomology groups inherits a natural $G$-action from $\Delta$). Our Theorem \ref{ReisnerAnalog} will consider the isotypic components 
\[
\left[\bigoplus_{U\in T(\Delta)_j}H^i(\Delta, \cost_\Delta s(U))\right]^{\mathlarger{\chi}}.
\]
In the $j=0$ case, we set
\begin{equation}\label{bettinotation}
\beta_i(\Delta)^\chi:=\dim_\field \widetilde{H}^i(\Delta)^\chi.
\end{equation}
These refined Betti number will be one of the main invariants considered in Sections 4 and 5 of this paper.

For the rest of this section, let $G=\ZZ/p\ZZ$ for some prime $p$ and let $\zeta$ be a primitive $p$-th root of unity. In this case, the group of irreducible characters of $G$ is isomorphic to $G$ itself. We can fix a generator $g$ for $G$ and write
\[
N=\bigoplus_{j=0}^{p-1}N^j,
\]
where
\[
N^j:=\{n\in N: g\cdot n = \zeta^jn\}.
\]
As in our Definition (\ref{bettinotation}), once a generator $g$ is fixed we set
\begin{equation}\label{bettiJnotation}
\beta_i(\Delta)^j:=\dim_\field \widetilde{H}^i(\Delta)^j.
\end{equation}

Now let $A=\field[x_1, \ldots, x_n]$, let $M$ be a $\ZZ$-graded $A$-module, and let $\deg(m)$ denote the degree of an element $m\in M$, i.e., $M=\bigoplus_iM_i$ where $M_i=\{m\in M: \deg(m)=i\}$. If $G$ acts on $M$ and fixes $M_i$ for all $i$, then each $M_i$ can be thought of as a $\field[G]$-module. As above,
\[
M_i=\bigoplus_{j=0}^{p-1} M_i^j
\]
where
\[
M_i^j=\{m\in M:\deg(m)=i\text{ and }g\cdot m = \zeta^jm\}.
\]
This allows for a $(\ZZ\times G)$-grading for $M$, where $M_i^j$ is the component of $M$ consisting of all elements of degree $(i, g^j)$. Denote by $M[a, b]$ the ``shifted'' $A$-module whose component in degree $(i, g^j)$ is the $(i+a, g^{j+b})$-th component of $M$. We will only ever use these shifted modules in examining individual pieces in some short exact sequences of $\ZZ\times G$-graded vector spaces in Section \ref{SchenzelSection}; to this end, we do not consider $M[a, b]$ as having any specific $G$-action of its own. Thinking now of $M$ as a $(\ZZ\times G)$-graded vector space over $\field$, we can define the Hilbert series $\Hilb(M, \lambda, t)$ of $M$ by
\[
\Hilb(M, \lambda, t)=\sum_{(i, g^j)\in (\ZZ\times G)}(\dim_\field M_i^j)\lambda^it^j
\]
where $\lambda$ and $t$ are indeterminates with $t^p=1$.

If $M$ is of Krull dimension $d>0$, we call a system $\Theta = \theta_1, \theta_2, \ldots, \theta_d$ of homogeneous elements in $A$ a \textbf{homogeneous system of parameters} (or an \text{h.s.o.p.}) for $M$ if $M/\Theta M$ is a finite-dimensional vector space over $\field$. If each $\theta_i\in A_1$, then we call $\Theta$ a \textbf{linear system of parameters} (or an \text{l.s.o.p.}) for $M$. We say that $M$ is \textbf{Cohen-Macaulay} if every l.s.o.p. is a regular sequence on $M$, and $M$ is \textbf{Buchsbaum} if every l.s.o.p. satisfies
\[
(\theta_1, \ldots, \theta_{i-1})M:_M\theta_i = (\theta_1, \ldots, \theta_{i-1})M:_M\mideal
\]
for $i=1, \ldots, d$. Here $\mideal=(x_1, \ldots, x_n)$ is the irrelevant ideal of $A$. Given any h.s.o.p. $\Theta$ for $M$, we also have the notion of the sigma module $\Sigma(\Theta; M)$, defined by
\[
\Sigma(\Theta; M):=\Theta M + \sum_{i=0}^d\left((\theta_1, \ldots, \hat{\theta_i}, \ldots, \theta_d)M:_M\theta_i\right).
\]
Lastly, let $H_\mideal^i(M)$ denote the $i$-th local cohomology module of $M$ with respect to $\mideal$ (the construction and properties of these modules may be found in \cite{24Hours}).

\subsection{Stanley--Reisner rings}

Let $\Delta$ be a $(d-1)$-dimensional simplicial complex with vertex set $[n]$ and define $A:=\field[x_1, \ldots, x_n]$. Given $\sigma\subset [n]$, write $x_\sigma=\prod_{i\in \sigma}x_i$. The \textbf{Stanley--Reisner ideal} of $\Delta$ is the ideal $I_\Delta$ of $A$ defined by
\[
I_\Delta=(x_\sigma:\sigma\subset [n], \sigma\not\in\Delta).
\]
The \textbf{Stanley--Reisner ring} of $\Delta$ (over $\field$) is 
\[
\field[\Delta]:= A/I_\Delta.
\]
The geometric notion of Buchsbaumness is tied algebraically to the Stanley--Reisner ring through the following vital result (found in \cite{Schenzel}).
\begin{theorem}[Schenzel]A pure simplicial complex $\Delta$ is Buchsbaum over $\field$ if and only if $\field[\Delta]$ is a Buchsbaum $A$-module.
\end{theorem}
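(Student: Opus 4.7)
The plan is to reduce the statement to a combinatorial calculation via Hochster's formula (Theorem \ref{Reisner}), using the St\"uckrad--Vogel characterization of Buchsbaum modules: a finitely generated graded $A$-module $M$ of Krull dimension $d$ is Buchsbaum if and only if $\mideal\cdot H_\mideal^i(M)=0$ for every $i<d$. Because each $H_\mideal^i(\field[\Delta])$ is Artinian, this condition is equivalent to $H_\mideal^i(\field[\Delta])$ being concentrated in $\ZZ^n$-degree $0$ for all $i<d$.

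With that characterization in hand, most of the argument is index chasing. Using the standard identification $H^{i-1}(\Delta,\cost_\Delta\sigma)\cong\widetilde{H}^{\,i-|\sigma|-1}(\lk_\Delta\sigma)$ (from excision together with the contractibility of $\st_\Delta\sigma$), I would rewrite Theorem \ref{Reisner} in the link form
\[
H_\mideal^i(\field[\Delta])_{-U}\;\cong\;\widetilde{H}^{\,i-|s(U)|-1}\!\bigl(\lk_\Delta s(U)\bigr)
\]
for each $U\in\NN^n$ with $s(U)\in \Delta$. Thus $H_\mideal^i(\field[\Delta])$ is concentrated in degree $0$ precisely when $\widetilde{H}^{\,i-|\sigma|-1}(\lk_\Delta\sigma)=0$ for every nonempty $\sigma\in\Delta$ and every $i<d$. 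Setting $j=i-|\sigma|-1$ and noting $\dim\lk_\Delta\sigma=d-|\sigma|-1$, the range $i<d$ converts exactly to $j<\dim\lk_\Delta\sigma$, which is the defining combinatorial vanishing of a Buchsbaum complex. The purity condition matches the equidimensionality of $\field[\Delta]$: on the combinatorial side, the primary decomposition $I_\Delta=\bigcap_{F\text{ facet}}(x_i:i\notin F)$ forces all associated primes of $\field[\Delta]$ to have codimension $|F|$; on the algebraic side, Buchsbaum-ness of $\field[\Delta]$ forces those associated primes to share a single codimension, hence the facets share a single cardinality.

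The main obstacle is making the $\mideal$-annihilation translation precise at the module level. The vector-space level of Theorem \ref{Reisner} is clean, but to pass between $\mideal\cdot H_\mideal^i(\field[\Delta])=0$ and the topological vanishing I also need to verify that multiplication by $x_k$ on $H_\mideal^i(\field[\Delta])$ corresponds, under Reisner's isomorphism, to the restriction map $\widetilde{H}^\ast(\lk_\Delta\sigma)\to\widetilde{H}^\ast(\lk_\Delta(\sigma\cup\{k\}))$ when $\sigma\cup\{k\}\in\Delta$ and to zero otherwise. This requires either a direct analysis of the \v{C}ech complex on $\{x_1,\dots,x_n\}$ or an appeal to Gr\"abe's refinement of Hochster's formula; once established, it is immediate that $\mideal\cdot H_\mideal^i(\field[\Delta])=0$ is equivalent to all these restriction maps landing in zero groups, which by the preceding paragraph is equivalent to the combinatorial Buchsbaum condition. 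Combined with the purity/equidimensionality matching, this closes both directions of the equivalence.
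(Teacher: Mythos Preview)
The paper does not prove this theorem at all; it simply quotes it as a known result of Schenzel (\cite{Schenzel}). So there is no ``paper's own proof'' to compare against, and your proposal must be judged on its own merits.

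Your overall strategy---reduce to Hochster/Gr\"abe and a cohomological criterion for Buchsbaumness---is the right one, and is essentially how Schenzel's original argument goes. But there is a genuine gap in the criterion you invoke. The condition ``$\mideal\cdot H_\mideal^i(M)=0$ for all $i<d$'' is \emph{not} the St\"uckrad--Vogel characterization of Buchsbaum modules; it is the definition of \emph{quasi}-Buchsbaum, which is strictly weaker in general (St\"uckrad gave graded counterexamples). The correct St\"uckrad--Vogel criterion is the surjectivity criterion: $M$ is Buchsbaum if and only if the canonical maps $\Ext_A^i(\field,M)\to H_\mideal^i(M)$ are surjective for $i<d$. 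Relatedly, your sentence ``because each $H_\mideal^i(\field[\Delta])$ is Artinian, this condition is equivalent to $H_\mideal^i(\field[\Delta])$ being concentrated in degree $0$'' is not right as stated: an Artinian module annihilated by $\mideal$ is merely a finite $\field$-vector space, and nothing about Artinianness forces it to sit in degree $0$.

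The repair is not hard, and most of the ingredients are already in your write-up. What actually gives concentration in degree $0$ is the Gr\"abe structure you mention later: for $U$ with $U_k\ge 2$ the multiplication-by-$x_k$ map on $H_\mideal^i(\field[\Delta])_{-U}$ is an isomorphism, so $\mideal\cdot H_\mideal^i=0$ forces $H_\mideal^i(\field[\Delta])_{-U}=0$ whenever $U\neq 0$ (if $U\in\{0,1\}^n$ is nonzero, pass to $2U$, which has the same support and hence the same cohomology, and argue there). Thus for Stanley--Reisner rings specifically, $\mideal\cdot H_\mideal^i(\field[\Delta])=0\Leftrightarrow H_\mideal^i(\field[\Delta])$ is concentrated in degree $0$; combine this with Hochster as you do to get the equivalence with the combinatorial link-vanishing. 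For the implication ``combinatorial Buchsbaum $\Rightarrow$ algebraic Buchsbaum'' you then need the (true) fact that a graded $A$-module whose intermediate local cohomology is concentrated in a single degree is Buchsbaum---this is a consequence of the surjectivity criterion, not of quasi-Buchsbaumness alone, and should be cited or proved explicitly. With those two fixes your outline becomes a correct proof.
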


If $\Delta$ admits an action by the group $G=\ZZ/p\ZZ$, then this extends to an action on $\field[\Delta]$ and induces a $(\ZZ\times G)$ grading as detailed in the previous section. If this action is free, then for any $j$ and $i\ge 1$ we have
\[
\dim_\field \field[\Delta]_i^j=\frac{1}{p}\dim_\field \field[\Delta]_i.
\]
We also have $\dim_\field \field[\Delta]_0^0=1$ and $\dim_\field \field[\Delta]_0^j=0$ for $0<j<p$. As in \cite[Section 3]{St-87}, this implies the following expression for the $(\ZZ\times G)$-graded Hilbert series of $\field[\Delta]$.
\begin{theorem}\label{SRHilb}
	Let $\Delta$ be a $(d-1)$-dimensional simplicial complex admitting a free action by the group $\ZZ/p\ZZ$. Then
	\[
	\Hilb(\field[\Delta], \lambda, t)=1+\frac{1}{p}\left[\frac{\sum_{i=1}^d h_i(\Delta)\lambda^i}{(1-\lambda)^d}-1\right](1+t+\cdots+t^{p-1}).
	\]
\end{theorem}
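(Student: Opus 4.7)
The plan is to compute the finely-graded Hilbert series directly, by splitting off the degree-zero contribution and then showing that in every positive degree, $\field[\Delta]_i$ decomposes as a multiple of the regular representation of $G = \ZZ/p\ZZ$. In degree zero, the data $\dim_\field \field[\Delta]_0^0 = 1$ and $\dim_\field \field[\Delta]_0^j = 0$ for $0 < j < p$ (already recorded just above the theorem) account for the leading $1$ in the claimed formula, so the task reduces to computing the graded dimensions of the isotypic components in each $\field[\Delta]_i$ with $i \geq 1$.

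The central step is a character-vanishing argument. The action of $G$ on $\field[\Delta]$ is inherited from its action as simplicial isomorphisms on $\Delta$, so it permutes the monomial basis of $\field[\Delta]$ via its permutation of the variables $x_1, \ldots, x_n$. Consequently, $\chi_{\field[\Delta]_i}(g)$ equals the number of degree-$i$ monomials $x_{i_1}^{a_1}\cdots x_{i_k}^{a_k}$ with $\{i_1, \ldots, i_k\} \in \Delta$ that are fixed by $g$. I would argue that such a monomial can be fixed only if $g$ permutes the variable set $\{i_1, \ldots, i_k\}$ in an exponent-preserving way, which in particular forces $g \cdot \{i_1, \ldots, i_k\} = \{i_1, \ldots, i_k\}$ as a face. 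For $i \geq 1$ this face is non-empty, and the freeness of the $G$-action on non-empty faces then rules out every non-identity $g$; hence $\chi_{\field[\Delta]_i}(g) = 0$ for all $g \neq e$. Character orthogonality for $\ZZ/p\ZZ$ then yields $\dim_\field \field[\Delta]_i^j = \frac{1}{p}\dim_\field \field[\Delta]_i$ uniformly in $j$, which is exactly the identity invoked just above the theorem.

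Assembling these pieces gives
\[
\Hilb(\field[\Delta], \lambda, t) = 1 + \frac{1 + t + \cdots + t^{p-1}}{p}\bigl(\Hilb(\field[\Delta], \lambda) - 1\bigr),
\]
after which the classical expression $\Hilb(\field[\Delta], \lambda) = \sum_{i=0}^d h_i(\Delta)\lambda^i/(1-\lambda)^d$ is substituted to produce the claimed identity. The main obstacle is the character-vanishing step: one has to notice that freeness of $G$ on non-empty faces propagates to freeness of $G$ on the monomial basis of $\field[\Delta]_i$ for every $i \geq 1$, via the observation that the stabilizer of a monomial necessarily stabilizes the face supporting it. Once that is in hand, the remainder is a straightforward bookkeeping assembly.
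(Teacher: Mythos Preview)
Your proposal is correct and follows essentially the same route the paper sketches: the paper records (just above the theorem) that $\dim_\field \field[\Delta]_i^j=\frac{1}{p}\dim_\field \field[\Delta]_i$ for $i\ge 1$ and that $\field[\Delta]_0=\field[\Delta]_0^0$, then cites \cite[Section 3]{St-87} for the assembly into the Hilbert series formula. Your character-vanishing argument is exactly the standard justification of that dimension identity, and your final assembly via the classical $h$-vector expression for $\Hilb(\field[\Delta],\lambda)$ is the intended one; the only thing you add beyond the paper is an explicit proof of the step it leaves to the reference.
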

In fact, if $\Delta$ is centrally-symmetric (so $p=2$) and Cohen-Macaulay, then a certain Hilbert series provides a strong inequality bounding the $h$-vector of $\Delta$; see \cite[Theorem III.8.1]{St-96}. If $\Theta$ is an l.s.o.p. for $\Delta$, then we denote by $\field(\Delta; \Theta)$ the quotient $\field[\Delta]/\Theta\field[\Delta]$.
\begin{theorem}[Stanley]Let $\Delta$ be a $(d-1)$-dimensional centrally-symmetric Cohen-Macaulay simplicial complex, and suppose $\Theta=(\theta_1, \ldots, \theta_d)$ is a l.s.o.p. for $\field[\Delta]$ in which $\theta_i\in\field[\Delta]_1^1$ for $i=1, \ldots, d$. Then
	\[
	\Hilb(\field(\Delta; \Theta), \lambda, t)=\frac{1}{2}\left[(1-t)(1+\lambda)^d+(1+t)\sum_{i=0}^dh_i(\Delta)\lambda^i\right].
	\]
\end{theorem}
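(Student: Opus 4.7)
The plan is to compute $\Hilb(\field(\Delta;\Theta),\lambda,t)$ in two stages: first, factor out the effect of killing the regular sequence $\Theta$ to express it as a multiple of $\Hilb(\field[\Delta],\lambda,t)$; second, substitute the explicit formula from Theorem \ref{SRHilb} (with $p=2$) and simplify using $t^2=1$.

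For the first stage, the Cohen-Macaulay hypothesis on $\Delta$ makes $\field[\Delta]$ a Cohen-Macaulay $A$-module, so the l.s.o.p. $\Theta$ is a regular sequence on $\field[\Delta]$. With $G=\ZZ/2\ZZ$ acting freely on $\Delta$, each generator $\theta_i\in\field[\Delta]_1^1$ is a $G$-eigenvector of bidegree $(1,g)$, so the ideal $\Theta\field[\Delta]$ is $G$-stable and the quotient $\field(\Delta;\Theta)$ inherits the $(\ZZ\times G)$-grading. For each $i$, the short exact sequence
\[
0\to\bigl(\field[\Delta]/(\theta_1,\ldots,\theta_{i-1})\bigr)[-1,-1]\xrightarrow{\cdot\theta_i}\field[\Delta]/(\theta_1,\ldots,\theta_{i-1})\to\field[\Delta]/(\theta_1,\ldots,\theta_i)\to 0
\]
of $(\ZZ\times G)$-graded vector spaces shows that killing $\theta_i$ multiplies the Hilbert series by $(1-\lambda t)$, and iterating yields $\Hilb(\field(\Delta;\Theta),\lambda,t)=(1-\lambda t)^d\Hilb(\field[\Delta],\lambda,t)$.

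For the second stage, splitting the binomial expansion of $(1-\lambda t)^d$ into even- and odd-$t$-degree parts and applying $t^2=1$ gives
\[
(1-\lambda t)^d=\tfrac{1}{2}\bigl[(1-t)(1+\lambda)^d+(1+t)(1-\lambda)^d\bigr].
\]
Multiplying by $(1+t)$ and using the orthogonality relations $(1+t)(1-t)=0$ and $(1+t)^2=2(1+t)$ yields the crucial identity $(1+t)(1-\lambda t)^d=(1+t)(1-\lambda)^d$. This precisely cancels the $(1-\lambda)^d$ denominator appearing in the formula of Theorem \ref{SRHilb}; after substituting that formula (using $h_0(\Delta)=1$) and collecting terms, the claimed expression drops out.

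The only conceptual subtlety is checking that the regular-sequence calculation respects the $(\ZZ\times G)$-grading, which is handled by the $G$-stability of $\Theta\field[\Delta]$. Beyond that, the proof is algebraic bookkeeping organized entirely around the collapse $(1+t)(1-\lambda t)^d=(1+t)(1-\lambda)^d$; this orthogonality does all the nontrivial work, and once it is in hand the final formula follows immediately.
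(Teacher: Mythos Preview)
Your proof is correct. The paper does not give its own proof of this statement---it is quoted as a classical result of Stanley---but your argument is exactly the Cohen--Macaulay specialization of the paper's proof of the more general Theorem~\ref{SchenzelAnalog}: there the authors write $\Hilb(\field(\Delta;\Theta),\lambda,t)=(1-\lambda t^m)^d\Hilb(\field[\Delta],\lambda,t)$ plus local-cohomology correction terms, and the ``first term'' computation uses precisely your identity $(1+t+\cdots+t^{p-1})(1-\lambda t^m)^d=(1+t+\cdots+t^{p-1})(1-\lambda)^d$ (your $(1+t)(1-\lambda t)^d=(1+t)(1-\lambda)^d$ is the $p=2$, $m=1$ case) to cancel the denominator from Theorem~\ref{SRHilb}; in the Cohen--Macaulay case the correction terms vanish and only this first-term computation survives.
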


\section{The main theorem: a refinement of Hochster}\label{ReisnerSection}
For the rest of the paper, fix $\Delta$ to be a $(d-1)$-dimensional simplicial complex on vertex set $[n]$. Let $\field$ be an extension of $\CC$, and let $A$ be the polynomial ring $\field[x_1, \ldots, x_n]$. Recall that $T(\Delta)_j$ is the set $\{U\in \NN^n: s(U)\in \Delta$ and $ |U|=j\}$.

\begin{theorem}\label{ReisnerAnalog}
	Let $\Delta$ be a simplicial complex with a free action by a finite abelian group $G$. Then the isomorphisms
	\[
	H_\mideal^i(\field[\Delta])_{-j}\cong \bigoplus_{U\in T(\Delta)_j}H^{i-1}(\Delta, \cost_\Delta s(U))
	\]
	of Theorem \ref{Reisner} induce vector space isomorphisms
	\begin{equation}\label{gradingEq}
	H_\mideal^i(\field[\Delta])_{-j}^\chi\cong \left[\bigoplus_{U\in T(\Delta)_j}H^{i-1}(\Delta, \cost_\Delta s(U))\right]^{\chi^{-1}}
	\end{equation}
	for each irreducible character $\chi$ of $G$.
\end{theorem}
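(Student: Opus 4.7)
The plan is to trace through the \v{C}ech-complex proof of Theorem \ref{Reisner} while keeping track of the $G$-action at each step. Since $G$ acts on the vertex set $[n]$, it permutes the variables of $A = \field[x_1, \ldots, x_n]$ and preserves both $I_\Delta$ and the irrelevant ideal $\mideal$; hence it acts on $\field[\Delta]$ and extends to an action on the \v{C}ech complex $\check{C}^\bullet = \bigoplus_{\sigma \in \Delta} \field[\Delta]_{x_\sigma}$. The $\ZZ^n$-grading is permuted by $g$, sending the multidegree-$(-V)$ piece to the multidegree-$(-gV)$ piece, so $G$ preserves the total-degree $-j$ subspace of $H^i_\mideal(\field[\Delta])$ and permutes its natural decomposition indexed by $V \in T(\Delta)_j$.

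Recall the Hochster identification in each multidegree: the multidegree-$(-V)$ strand of $\check{C}^\bullet$ has a one-dimensional contribution $\field \cdot x^{-V} \subseteq \field[\Delta]_{x_\tau}$ in \v{C}ech degree $|\tau|$ for each $\tau \in \Delta$ with $\tau \supseteq s(V)$, and matching $x^{-V}$ with the dual cochain of $\tau \setminus s(V)$ identifies this strand (after a shift) with the cochain complex of $\lk_\Delta s(V)$. The long exact sequence of the pair then identifies its cohomology with $H^{i-1}(\Delta, \cost_\Delta s(V))$, and summing over $V$ recovers Theorem \ref{Reisner}.

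The key comparison is between the two induced $G$-actions. On the left, $g$ acts \emph{covariantly} on indices: $g \cdot x^{-V} = x^{-gV}$, so the summand indexed by $V$ is sent to the one indexed by $gV$. On the right, the natural $G$-action on simplicial cohomology of a pair is by pullback $g^*$, a \emph{contravariant} construction, which carries a cochain on $(\Delta, \cost_\Delta s(V))$ back to a cochain on $(\Delta, \cost_\Delta s(g^{-1}V))$. Under the Hochster identification the LHS action of $g$ therefore corresponds to the RHS action of $g^{-1}$. For abelian $G$ this is precisely the inversion of characters: if $g \cdot v = \chi(g) v$ under one action, then $v$ is an eigenvector with eigenvalue $\chi(g^{-1}) = \chi^{-1}(g)$ under the inverted action, so isotypic components match as $N^\chi \leftrightarrow N^{\chi^{-1}}$.

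The main obstacle is making this contravariance argument fully rigorous: one needs to check that the signs in the \v{C}ech differential and the simplicial coboundary are compatible after the identification, so that the induced map on cohomology genuinely intertwines the forward action on $x^{-V}$ with the pullback action on cochains. Freeness of the action on non-empty faces conveniently eliminates any stabilizer complications, since for any nonzero $V \in T(\Delta)_j$ the stabilizer $G_V$ is contained in the trivial stabilizer of $s(V)$, so each $G$-orbit in $T(\Delta)_j \setminus \{0\}$ contributes a regular representation to each side and the character-by-character matching reduces to elementary representation theory; the single fixed point $V = 0$ is handled by the same covariant-versus-pullback comparison applied to the vertex orbits.
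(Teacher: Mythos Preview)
Your approach via the \v{C}ech complex is genuinely different from the paper's, which works instead with the Koszul resolution and the identification $H^i_\mideal(\field[\Delta])_{-j} = H^i(\Hom_A(K_\cdot, \field[\Delta])_{-j})$, where $K_\cdot$ is the Koszul complex on $x_1^{j+1}, \ldots, x_n^{j+1}$. The paper lets $G$ act on $\Hom_A(K_\cdot, \field[\Delta])$ through the target $\field[\Delta]$ alone; since the Koszul differential is $A$-linear this action visibly commutes with it, which sidesteps the permutation-sign problem you flag for the \v{C}ech differential.

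The paper's key technical move has no analogue in your outline. Miyazaki's chain map $\psi$ realizing the Hochster isomorphism (sending $\widehat{\sigma}$ in the $U$-summand to the single homomorphism $f^U_{\sigma,\sigma}$) does \emph{not} have clean $G$-equivariance. The paper therefore introduces a second map $\varphi$, obtained by summing over the $G$-orbit in the source index, $\varphi(\widehat{\sigma}) = \sum_{\tau \in \mathcal{O}(\sigma)} f^U_{\tau,\sigma}$, and verifies directly that $\varphi(g\cdot c) = g^{-1}\cdot \varphi(c)$. Although $\varphi$ is not itself a chain map, the difference $\varphi - \psi$ lands in the span of the off-diagonal terms $f^U_{\tau,\sigma}$ with $\tau \neq \sigma$, which Reisner's argument shows is acyclic. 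That last step is exactly where freeness is used: it guarantees that the nontrivial orbit terms $g\cdot\sigma$ are genuinely off the diagonal. So in the paper's proof freeness is essential to the comparison of $\varphi$ with $\psi$, not merely the stabilizer-regularity convenience you describe.

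Your covariant-versus-pullback heuristic is a plausible explanation of where the $\chi \leftrightarrow \chi^{-1}$ inversion should come from, and a \v{C}ech argument along these lines might well be made to work (perhaps even without freeness). But you have named the main obstacle---compatibility of the permutation signs in the \v{C}ech differential with those in the simplicial coboundary under the Hochster identification---without resolving it, so what you have is a sketch rather than a proof.
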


\begin{remark}There are a multitude of proofs of Theorem \ref{Reisner}, such as \cite[Theorem 1]{Grabe} or \cite[Corollary 4.4]{canonicalMap}. Our proof of the refinement above will be a modification of that which appears in \cite{characterizations}.
\end{remark}

\begin{proof}
From the proof of \cite[Theorem 1]{characterizations}, we know already that
\[
H_\mideal^i(\field[\Delta])_{-j}=\Ext_{A}^i(A/\mideal_{j+1}, \field[\Delta])_{-j},
\]
where $\mideal_{j+1}$ is the ideal $(x_1^{j+1}, \ldots, x_n^{j+1})$. The $A$-module $\Ext_{A}^i(A/\mideal_{j+1}, \field[\Delta])_{-j}$ may be computed as the cohomology of a cochain complex:
\[
\Ext_{A}^i(A/\mideal_{j+1}, \field[\Delta])_{-j}= H^i(\Hom_A(K_\cdot, \field[\Delta])_{-j}),
\]
where $K_\cdot$ denotes the Koszul complex of $A$ with respect to the sequence $(x_1^{j+1}, \ldots, x_n^{j+1})$. Viewing the right-hand side as a $\field[G]$-module with the natural action inherited from $\field[\Delta]$ and writing $C^\cdot(\Delta, \Gamma)$ for the relative simplicial cochain complex of the pair $(\Delta, \Gamma)$ with coefficients in $\field$, it suffices to show that the isomorphism
\begin{equation}\label{ExtAndCochainComplex}
H^i(\Hom_A(K_\cdot, \field[\Delta])_{-j})\cong \bigoplus_{U\in T(\Delta)_j}H^{i-1}(C^\cdot(\Delta, \cost_\Delta s(U))).
\end{equation}
``inverts'' isotypic components as in the statement of the theorem.

First note that the cochain complex $C^\cdot(\Delta, \Gamma)$ has a standard basis written as $\{\widehat{\sigma}:\sigma\in(\Delta\smallsetminus\Gamma) \}$, where $\widehat{\sigma}(\tau)=1$ if $\sigma=\tau$ and $\widehat{\sigma}(\tau)=0$ otherwise. On the left-hand side of (\ref{ExtAndCochainComplex}), we will view each $K_t$ (for $0\le t\in \ZZ$) as the direct sum
\[
K_t = \bigoplus_{1\le i_1<i_2<\cdots<i_t\le n}A(x_{i_1}^{j+1}\wedge x_{i_2}^{j+1}\wedge\cdots\wedge x_{i_t}^{j+1}).
\]
Given some set $\sigma=\{i_1, \ldots, i_t\}\subset [n]$ with $i_1<i_2<\cdots<i_t$, define $\overline{x}_\sigma:=x_{i_1}^{j+1}\wedge\cdots\wedge x_{i_t}^{j+1}$ in $K_t$. Likewise, if $U=(i_1, \ldots, i_n)\in \NN^n$, define $x_U:=x_{1}^{i_1}\cdots x_{n}^{i_n}$ in $\field[\Delta]$. If $\sigma, \tau\subset [n]$ are such that $|\sigma|=|\tau|=t$ and $U\in \NN^n$ is such that $s(U)\subset \tau$, define
\[
f_{\sigma, \tau}^U(\overline{x}_\rho)=\left\{\begin{array}{cc} (x_\tau)^{j+1}/x_U & \rho=\sigma\\ 0 & \rho\not=\sigma.\end{array}\right.
\]
Then $\Hom_A(K_t, \field[\Delta])_{-j}$ has a vector subspace with basis given by
\[
D^t=\{f_{\sigma, \tau}^U: |\sigma|=|\tau|=t, \text{ $\tau\in \Delta$, $|U|=j$ and }s(U)\subset \tau \}.
\]
Given a face $\sigma\in\Delta$, denote by $\mathcal{O}(\sigma):=\{g\cdot \sigma: g\in G\}$, the orbit of $\sigma$ under the action of $G$. To each face $\sigma$ in $\Delta$ and each $U\in \NN^n$ with $|U|=j$ and $s(U)\subset \sigma$, assign the homomorphism
\[
f_\sigma^U:=\sum_{\tau\in \mathcal{O}(\sigma)} f_{\tau, \sigma}^U
\]
and note that $g\cdot f_\sigma^U=f_{g\cdot\sigma}^{g\cdot U}$ by the action of $G$ on $\field[\Delta]$. Now define a map
\[
\varphi_{{}_U}:C^{t-1}(\Delta, \cost_\Delta s(U))\to \Hom_A(K_t, \field[\Delta])_{-j}
\]
by
\[
\varphi_{{}_U}:\widehat{\sigma}+C^{t-1}(\cost_\Delta s(U))\mapsto f_\sigma^U
\]
and let $\varphi$ be the direct sum of maps $\bigoplus_{U\in T(\Delta)_j} \varphi_U$. Note that $\varphi(g\cdot c)=g^{-1}\cdot\varphi(c)$ for all $g\in G$.

Two important facts follow from Miyazaki and Reisner's proofs of Theorem \ref{Reisner}. From Miyazaki's proof, we know that the direct sum of maps
\[
\psi:\bigoplus_{U\in T(\Delta)_j}C^{t-1}(\Delta, \cost_\Delta s(U))\to \Hom_A(K_t, \field[\Delta])_{-j}
\]
defined componentwise by
\[
\psi(\widehat{\sigma}+C^{t-1}(\cost_\Delta s(U)))=f_{\sigma, \sigma}^U
\]
is a chain map and induces an isomorphism in homology. Reisner's proof (\cite[Theorem 2]{Reisner}) shows that the image of the difference $\varphi-\psi$ lies entirely in an acyclic component of $\Hom_A(K_\cdot, \field[\Delta])_{-j}$ (this occurs because the action is free, i.e., $g\cdot \sigma$ and $h\cdot \tau$ are not equal for any $g\not= h$ in $G$). Hence, while $\varphi$ may not itself be a chain map, it does induce the same isomorphism of vector spaces as $\psi$.

It remains to check that $\varphi$ maps
\[
\left[\bigoplus_{|U|=j} C^{t-1}(\Delta, \cost_\Delta s(U))\right]^\chi
\]
to $\Hom_A(K_t, \field[\Delta])_{-j}^{\chi^{-1}}$. Given $U\in T(\Delta)_j$ and $\sigma\not\in \cost_\Delta s(U)$, the span of
\begin{align*}
\widehat{\mathcal{O}}(\widehat{\sigma}):&=\{g\cdot (\widehat{\sigma}+C^{t-1}(\cost_\Delta s(U)):g\in G\}
\end{align*}
forms a $\field[G]$-submodule of
\[
\bigoplus_{U\in T(\Delta)_j} C^{t-1}(\Delta, \cost_\Delta s(U)).
\]
Let
\[
c=\sum_{g\in G}a_g\left[g\cdot (\widehat{\sigma}+C^{t-1}(\cost_\Delta s(U))\right]
\]
be a basis element for the isotypic component of the span of $\widehat{\mathcal{O}}(\sigma)$ corresponding to some character $\chi$. That is, given $h\in G$, we have $h\cdot c = \chi(h)c$. Then
\begin{align*}
h\cdot \sum_{g\in G}a_g\left[g\cdot (\widehat{\sigma}+C^{t-1}(\cost_\Delta s(U))\right]
&= \sum_{g\in G}a_g\left[hg\cdot (\widehat{\sigma}+C^{t-1}(\cost_\Delta s(U))\right]\\
&=\sum_{g\in G}a_{h^{-1}g}\left[g\cdot (\widehat{\sigma}+C^{t-1}(\cost_\Delta s(U))\right].
\end{align*}
Since the $(\widehat{\sigma}+C^{t-1}(\cost_\Delta s(U))$'s are linearly independent over $\field$, this shows that $a_{h^{-1}g}=\chi(h)a_g$.

We now only need to check that $h\cdot \varphi(c)=\chi^{-1}(h)\varphi(c)$. Note that
\[
\varphi(c)=\sum_{g\in G}a_g(g^{-1}\cdot f_\sigma^U),
\]
so
\[
h\cdot \varphi(c)=\sum_{g\in G}a_g(hg^{-1}\cdot f_{\sigma}^U)=\sum_{g\in G}a_{hg}(g^{-1}\cdot f_\sigma^U)
\]
Since $a_{hg}=\chi(h^{-1})a_g=\chi^{-1}(h)a_g$, this finishes the proof.
\end{proof}

\section{Applications}\label{Applications}

\subsection{Hilbert series of Artinian reductions and an analog of Schenzel's formula}\label{SchenzelSection}
From now on, $\Delta$ is fixed to be Buchsbaum and $G=\ZZ/p\ZZ$ for some prime $p$ with a fixed generator $g$. In a way similar to \cite[Section 8]{St-96}, if the action of $G$ is very free and $0\le \delta_i\le p-1$ for $i=1, \ldots, d$, then we can easily construct a l.s.o.p. $\Theta=\theta_1, \ldots, \theta_d$ for $\field[\Delta]$ in which $\theta_i\in A_1^{\delta_i}$ for $1\le i \le d$ as follows.

First, choose one face from each $G$-orbit of $\Delta$ and collect them into the set $\Delta_G$. Let $W$ be the set of vertices of $\Delta$ that are in $\Delta_G$, and choose functions $t_1, \ldots, t_d:W\to\field$ such that their restrictions to any subset of $W$ of size $d$ are linearly independent. Now extend $t_i$ to all of $[n]$ by setting
\[
t_i(g^k\cdot v)=\zeta^{-k\delta_i}t_i(v)
\]
for all $i$ and $k$, and let
\[
\theta_i=\sum_{v\in [n]}t_i(v)x_v.
\]
Then
\begin{align*}
g\cdot\theta_i &= \sum_{v\in [n]}t_i(v)x_{g\cdot v}=\sum_{v\in [n]}t_i(g^{-1}\cdot v)x_v=\sum_{v\in [n]}\zeta^{\delta_i}t_i(v)x_v,
\end{align*}
so $\theta_i\in\field[\Delta]_1^{\delta_i}$ for $i=1, \ldots, d$. Furthermore, since no facet of $\Delta$ contains $\{g^i\cdot v, g^j\cdot v\}$ for any $v$ with $i\not\equiv j \mod p$, the system $\Theta=(\theta_1, \ldots, \theta_d)$ forms an l.s.o.p. that is homogeneous with respect to the $(\ZZ\times G)$-grading by \cite[Lemma III.2.4(a)]{St-96}.

\begin{remark}\label{freeactionlsop}The construction above is included purely for the sake of concreteness in the case of a very free action. In fact, as Adin shows in his thesis (\cite{AdinThesis}), it is possible to construct an l.s.o.p. with the prescribed properties above even in the case of a free action. The construction involves changing the field $\field$ to a particular extension of $\CC$. However, as we are now turning our focus to certain Hilbert series (which are invariant under field extensions), all of the results that follow hold for any extension of $\CC$ and are stated with this understanding in mind.
\end{remark}

Now that the existence of l.s.o.p.'s of the form above has been established, we may use them to prove the following theorem (recall the refined Betti number notation of definition (\ref{bettiJnotation}) and that $\field(\Delta; \Theta)=\field[\Delta]/\Theta\field[\Delta]$ for an l.s.o.p. $\Theta$).

\begin{theorem}\label{SchenzelAnalog}Let $\Delta$ be a $(d-1)$-dimensional Buchsbaum simplicial complex admitting a free group action by $G=\ZZ/p\ZZ$, let $0\le m\le p-1$ be some fixed degree, and let $\Theta$ be a $G$-homogeneous l.s.o.p. for $\field[\Delta]$ such that $\theta_i\in\field[\Delta]_1^m$ for all $i$. Then the $(\ZZ\times G)$-graded Hilbert series of $\field(\Delta;\Theta)$ is given by
\begin{align*}
\Hilb(\field(\Delta; \Theta), \lambda, t)=&\sum_{i=0}^d\left[(-1)^i{d\choose i}t^{mi}+\left(\frac{1}{p}\sum_{k=0}^{p-1}t^k\right)\left(h_i(\Delta)+(-1)^{i+1}{d\choose i}\right)\right]\lambda^i\\
&+\sum_{i=0}^d{d\choose i}\lambda^i\sum_{j=0}^{i-1}(-1)^{i-j-1}\left(\sum_{k=0}^{p-1}t^k\beta_{j-1}(\Delta)^{mi-k} \right).
\end{align*}
\end{theorem}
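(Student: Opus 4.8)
The plan is to compute the $(\mathbb{Z}\times G)$-graded Hilbert series of $\field(\Delta;\Theta)$ by combining the known ungraded formula (Schenzel's Theorem~\ref{Schenzel}) with the refined local-cohomology decomposition of Theorem~\ref{ReisnerAnalog}, tracking the $G$-action throughout. The starting point is the fact, standard in the Buchsbaum setting (see Schenzel's original argument), that for a Buchsbaum module $M$ of Krull dimension $d$ and an l.s.o.p.\ $\Theta$, one has an exact sequence computing $M/\Theta M$ in terms of the Koszul homology of $\Theta$ acting on the lower local cohomology modules $H^i_\mideal(M)$ for $i<d$. Concretely, the ``error term'' between $\dim_\field(M/\Theta M)_i$ and the Cohen--Macaulay value $h_i$ is governed by $\bigoplus_{i<d} H^i_\mideal(\field[\Delta])$, and in the Buchsbaum case each such $H^i_\mideal(\field[\Delta])$ is concentrated in degree $0$ with $\dim_\field H^i_\mideal(\field[\Delta])_0 = \beta_{i-1}(\Delta)$ (this is exactly what underlies the $\binom{d}{i}$ and $\beta_{j-1}$ terms in Theorem~\ref{Schenzel}). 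So the first step is to set up this exact sequence $G$-equivariantly, using the $(\mathbb{Z}\times G)$-graded structure from Section~\ref{sect:preliminaries}.

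Next I would decompose each ingredient into isotypic components. There are three contributions. (i) The ``generic'' part: since the action of $G$ on $\field[\Delta]$ is free, Theorem~\ref{SRHilb} tells us that $\Hilb(\field[\Delta],\lambda,t)$ splits as $1$ plus $\frac1p(1+t+\cdots+t^{p-1})$ times the reduced Poincaré series; after killing $\Theta$, the portion of the answer coming from the Cohen--Macaulay-type contribution $\sum h_i(\Delta)\lambda^i$ and its $\binom{d}{i}$ correction must therefore carry the balanced weight $\frac1p\sum_k t^k$, producing the $\left(\frac1p\sum_{k=0}^{p-1}t^k\right)\bigl(h_i(\Delta)+(-1)^{i+1}\binom{d}{i}\bigr)$ term. (ii) The Koszul-complex part: $\Theta$ consists of elements of $G$-degree $m$, so the exterior algebra $\bigwedge^\bullet(\theta_1,\dots,\theta_d)$ contributes, in homological degree $i$, a space of $G$-degree $mi$ and dimension $\binom di$; this is the source of the $(-1)^i\binom di t^{mi}$ term. (iii) The local cohomology part $H^i_\mideal(\field[\Delta])_0$ for $i<d$, tensored against the Koszul exterior powers: by Theorem~\ref{ReisnerAnalog} with $j=0$, the isotypic component $H^i_\mideal(\field[\Delta])_0^\chi$ is identified with $\widetilde H^{i-1}(\Delta)^{\chi^{-1}}$, i.e.\ has dimension $\beta_{i-2}(\Delta)^{-\chi}$ in the $\ZZ/p\ZZ$ notation (with the character inversion). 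Carefully bookkeeping the shift by $mi$ coming from the $i$-th exterior power of $\Theta$ and summing the alternating Koszul contributions then yields the double sum $\sum_i \binom di\lambda^i\sum_{j=0}^{i-1}(-1)^{i-j-1}\sum_{k}t^k\beta_{j-1}(\Delta)^{mi-k}$.

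The last step is to assemble these three pieces, check that the ungraded specialization (set $t=1$) recovers Schenzel's formula exactly — this is a good consistency check, since $\frac1p\sum_k t^k \mapsto 1$, $t^{mi}\mapsto 1$, the first bracket collapses to $h_i(\Delta)$, and $\sum_k \beta_{j-1}(\Delta)^{mi-k} = \beta_{j-1}(\Delta)$ — and confirm that at $j=0$ the formula behaves correctly. I expect the main obstacle to be the precise equivariant bookkeeping in step (i)–(iii): one must make sure the Koszul differentials are tracked as maps of $(\mathbb Z\times G)$-graded vector spaces with the correct degree shifts, that the direct-sum decomposition over isotypic components commutes with taking homology of the Koszul complex (it does, since $G$ acts semisimply over an extension of $\CC$), and — most delicately — that the character-inversion in Theorem~\ref{ReisnerAnalog} is threaded through correctly so that a $G$-degree-$(mi-k)$ piece of $\widetilde H^{j-1}(\Delta)$ ends up attached to $t^k$ rather than $t^{mi-k}$ or $t^{-k}$. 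Getting the signs and the index $mi-k$ exactly right in that final double sum is where the real care is required; the rest is an exact-sequence dimension count.
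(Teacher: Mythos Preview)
Your plan is essentially correct and matches the paper's strategy: split the Hilbert series into a ``Cohen--Macaulay'' piece coming from $(1-\lambda t^m)^d\,\Hilb(\field[\Delta],\lambda,t)$ (which, via Theorem~\ref{SRHilb} and the identity $t^m(1+\cdots+t^{p-1})=1+\cdots+t^{p-1}$, yields exactly your pieces (i) and (ii)) plus a correction term governed by the lower local cohomology, and then invoke Theorem~\ref{ReisnerAnalog} in degree $0$ to read off the $G$-graded Betti contributions with the character inversion producing the index $mi-k$.

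The one real difference is in the machinery for the correction term. You propose a direct Koszul-complex argument; the paper instead runs Schenzel's iterated short-exact-sequence argument, writing $Q_s=H^0_\mideal(\field_{s-1}[\Delta])$ and appealing to \cite[Proposition~II.4.14$'$]{StVo} for the $(\ZZ\times G)$-graded decomposition
\[
H^0_\mideal(\field_{s-1}[\Delta])\cong\bigoplus_{i}\Bigl(H^i_\mideal(\field[\Delta])[-i,-im]\Bigr)^{\binom{s-1}{i}},
\]
then summing over $s$ and collapsing via the Vandermonde identity $\sum_s\binom{d-s}{r}\binom{s-1}{i-r-1}=\binom{d}{i}$. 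Your Koszul framing is morally the same computation packaged differently and should work, but you will still need the equivariant analogue of that St\"uckrad--Vogel result (or an equivalent statement about Koszul homology of a Buchsbaum module) to justify that the correction term really is $\bigoplus_i\binom{d}{i}H^i_\mideal(\field[\Delta])[-i,-im]$ as a $(\ZZ\times G)$-graded vector space; that is the one step your outline leaves as a black box. Also, a small index slip: $H^i_\mideal(\field[\Delta])_0^\chi$ has dimension $\beta_{i-1}(\Delta)^{\chi^{-1}}$, not $\beta_{i-2}$.
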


\begin{remark}Note that by setting $t=1$, we can recover Schenzel's Theorem \ref{Schenzel}. Furthermore, we conclude that $h_i(\Delta) \equiv (-1)^i{d\choose i} \pmod p$ for all $i$.
\end{remark}

\begin{proof}
Suppose that $\Theta$ is an arbitrary l.s.o.p.\@ with $\theta_i\in\field[\Delta]_i^{\delta_i}$ for $i=1, \ldots, d$. For $s=1, \ldots, d$, denote
\[
\field_s[\Delta]:=\field[\Delta]/(\theta_1, \ldots, \theta_s)\field[\Delta].
\]
Then we have exact sequences of the form
\[
0\to Q_s\to \field_{s-1}[\Delta]\xrightarrow{\cdot\theta_s} \field_{s-1}[\Delta]\to \field_s[\Delta]\to 0
\]
where
\[
Q_s=0:_{\field_{s-1}[\Delta]}\theta_s=0:_{\field_{s-1}[\Delta]}\mideal = H_\mideal^0(\field_{s-1}[\Delta]),
\]
with the second equality following from \cite[Proposition I.1.10(i)']{StVo} and the third from the proof of \cite[Proposition I.2.2]{StVo}. On the level of Hilbert series, a standard argument yields the following equation:
\[
\Hilb(\field(\Delta; \Theta), \lambda, t)=\Hilb(\field[\Delta], \lambda, t)\prod_{i=1}^d(1-\lambda t^{\delta_i})+\sum_{s=1}^d\lambda t^{\delta_i}\Hilb(H_\mideal^0(\field_{s-1}[\Delta]), \lambda, t)\prod_{j=s+1}^d(1-\lambda t^{\delta_j}).
\]
In the case $\theta_i\in \field[\Delta]_i^m$ for $i=1, \ldots, d$, the equation above simplifies to
\[
\Hilb(\field(\Delta; \Theta), \lambda, t)=(1-\lambda t^m)^d\Hilb(\field[\Delta], \lambda, t)+\sum_{s=1}^d\lambda t^m(1-\lambda t^m)^{d-s}\Hilb(H_\mideal^0(\field_{s-1}[\Delta]), \lambda, t).
\]
Analyzing the first term yields the following, using Theorem \ref{SRHilb}:
\begin{align*}
(1-\lambda t^m)^d\Hilb(\field[\Delta], \lambda, t)&=(1-\lambda t^m)^d\left[1+\frac{1}{p}\left(\frac{\sum_{i=0}^d h_i(\Delta)\lambda^i}{(1-\lambda)^d}-1\right)(1+t+\cdots+t^{p-1})\right]\\
&=(1-\lambda t^m)^d+\frac{(1+t+\cdots+t^{p-1})}{p}\left[\sum_{i=0}^dh_i(\Delta)\lambda^i - (1-\lambda t^m)^d\right]\\
&=\sum_{i=0}^d\left[(-1)^i{d\choose i}t^{mi}+\frac{1}{p}\sum_{k=0}^{p-1}t^k\left(h_i(\Delta)+(-1)^{i+1}{d\choose i}\right)\right]\lambda^i.
\end{align*}
For the second term (recall the shifted module notation from Section \ref{sect:preliminaries}),
\[
H_\mideal^0(\field_{s-1}[\Delta])\cong\bigoplus_{i=0}^{s-1}\left(\bigoplus_{{s-1\choose i}}H_\mideal^i(\field[\Delta])[-i, -im]\right)
\]
as vector spaces by \cite[Proposition II.4.14']{StVo} (strictly speaking, the stated result is for $\ZZ$-graded $A$-modules, but the exact same proof works in the $(\ZZ\times G)$-graded case when only considering both sides as vector spaces).

By our Theorem \ref{ReisnerAnalog}, $H^i_\mideal(\field[\Delta])[-i, -im]^k$ is concentrated in $\ZZ$-degree $i$ and has dimension $\beta_{i-1}(\Delta)^{mi-k}$ in degree $(i, g^k)$. Hence,
\[
\Hilb(H_\mideal^0(\field_{s-1}[\Delta]), \lambda, t)=\sum_{i=0}^d{s-1\choose i}\lambda^i\left(\sum_{k=0}^{p-1}t^k\beta_{i-1}(\Delta)^{mi-k}\right),
\]
and so the $\lambda^it^k$ coefficient of $\lambda t^m\Hilb(H_\mideal^0(\field_{s-1}[\Delta]), \lambda, t)$ is ${s-1\choose i-1}\beta_{i-2}(\Delta)^{mi-k}$. Then the $\lambda^it^k$ coefficient of $(1-\lambda t^m)^{d-s}\lambda t^m\Hilb(H_\mideal^0(\field_{s-1}[\Delta]), \lambda, t)$ is given by
\[
\sum_{r=0}^i(-1)^r{d-s\choose r}{s-1\choose i-r-1}\beta_{i-r-2}(\Delta)^{mi-k}.
\]
Now we sum over all values of $s$, setting $f(s)={d-s\choose r}{s-1\choose i-r-1}$:
\begin{align*}
\sum_{s=1}^d\sum_{r=0}^i(-1)^r{d-s\choose r}{s-1\choose i-r-1}\beta_{i-r-2}(\Delta)^{mi-k}&=\sum_{r=0}^i\left[(-1)^r\beta_{i-r-2}(\Delta)^{mi-k}\sum_{s=1}^df(s)\right]\\
&={d\choose i}\sum_{r=0}^i(-1)^r\beta_{i-r-2}(\Delta)^{mi-k}\\
&={d\choose i}\sum_{j=0}^{i-1}(-1)^{i-j-1}\beta_{j-1}(\Delta)^{mi-k}.
\end{align*}
Here the second equality follows from the identity $\sum_{r=0}^{d-s}{d-s\choose r}{s-1\choose i-r-1}={d\choose i}$ and the third follows from setting $j=i-r-1$.
\end{proof}
Of particular interest is the $G=\ZZ/2\ZZ$ case, in which many simplifications can be made to the expression in Theorem \ref{SchenzelAnalog}. This results in the following corollary.
\begin{corollary}\label{CsSchenzel}
Let $\Delta$ be a centrally-symmetric Buchsbaum complex with $\Theta=(\theta_1, \ldots, \theta_d)$ an l.s.o.p. for $\Delta$ such that $\theta_i\in A_1^m$ for all $i$ and some fixed $m$. Then
\[
\dim_\field \field(\Delta; \Theta)_i^k = \frac{1}{2}\left(h_i(\Delta)+(-1)^{i+k+mi}{d\choose i}\right)+{d\choose i}\sum_{j=0}^{i-1}(-1)^{i-j-1}\beta_{j-1}(\Delta)^{mi-k}.
\]
\end{corollary}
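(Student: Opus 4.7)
The plan is to obtain this corollary as a direct specialization of Theorem \ref{SchenzelAnalog} at $p = 2$, then extract the coefficient of $\lambda^i t^k$ for each $i$ and each $k \in \{0,1\}$. The only subtlety is translating between the ``Kronecker-delta'' that arises when collecting powers of $t$ (using $t^2 = 1$) and the unified sign $(-1)^{i+k+mi}$ that appears in the statement.

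First, I would substitute $p = 2$ into the formula of Theorem \ref{SchenzelAnalog}, so that $\frac{1}{p}\sum_{k=0}^{p-1} t^k = \tfrac{1}{2}(1+t)$ and $\sum_{k=0}^{p-1} t^k \beta_{j-1}(\Delta)^{mi-k} = \beta_{j-1}(\Delta)^{mi} + t\,\beta_{j-1}(\Delta)^{mi-1}$. Since $t^2 = 1$, the monomial $t^{mi}$ equals $t^{mi \bmod 2}$, and so the contribution of $(-1)^i\binom{d}{i}t^{mi}$ to the $\lambda^i t^k$ coefficient is $(-1)^i\binom{d}{i}\cdot [mi\equiv k \pmod 2]$.

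Next, I would use the identity
\[
[mi\equiv k \pmod 2] \;=\; \frac{1 + (-1)^{mi-k}}{2}
\]
to rewrite this indicator. Combining it with the contribution $\tfrac{1}{2}\bigl(h_i(\Delta) + (-1)^{i+1}\binom{d}{i}\bigr)$ coming from the $\tfrac{1}{2}(1+t)$ factor, the $\binom{d}{i}$-terms arising from the first sum of Theorem \ref{SchenzelAnalog} collapse into
\[
\tfrac{1}{2}(-1)^i\binom{d}{i} + \tfrac{1}{2}(-1)^{i+mi-k}\binom{d}{i} - \tfrac{1}{2}(-1)^i\binom{d}{i} \;=\; \tfrac{1}{2}(-1)^{i+mi+k}\binom{d}{i},
\]
using $(-1)^{-k} = (-1)^k$. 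Adding $\tfrac{1}{2} h_i(\Delta)$ recovers the first summand of the corollary. Extracting the coefficient of $\lambda^i t^k$ from the second sum of Theorem \ref{SchenzelAnalog} is immediate: it simply selects $\beta_{j-1}(\Delta)^{mi-k}$ from $\beta_{j-1}(\Delta)^{mi} + t\,\beta_{j-1}(\Delta)^{mi-1}$, producing the Betti number term verbatim.

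The main ``obstacle'' is therefore purely bookkeeping: reconciling the parity indicator $[mi \equiv k]$ with the symmetric exponent $i+k+mi$ in the claim. Once the identity $\frac{1+(-1)^n}{2} = [n \text{ even}]$ is invoked, the two expressions are equal on the nose, so no additional structural input beyond Theorem \ref{SchenzelAnalog} is needed.
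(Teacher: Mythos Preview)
Your proposal is correct and follows exactly the approach the paper takes: the corollary is stated as an immediate specialization of Theorem~\ref{SchenzelAnalog} at $p=2$, and the paper gives no further argument beyond noting that ``many simplifications can be made.'' Your bookkeeping with the parity indicator is precisely the simplification in question.
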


\subsection{The sigma module}
When only considering a $\ZZ$-grading on $\field[\Delta]$, it is possible to mod $\field(\Delta; \Theta)$ out by an additional submodule in order to get an even tighter bound on the $h$-vector of $\Delta$. In particular, the sigma module can be used to great effect as follows (\cite[Theorem 1.2]{BBMD}).

\begin{theorem}[Murai--Novik--Yoshida]\label{BBMD}
	Let $\Delta$ be a Buchsbaum simplicial complex of dimension $d-1$ and let $\Theta$ be an l.s.o.p. for $\Delta$. Then
	\[
	\dim_\field \left(\field[\Delta]/\Sigma(\Theta; \field[\Delta])\right)_i = h_i(\Delta)+{d\choose i}\sum_{j=0}^i(-1)^{i-j-1}\beta_{j-1}(\Delta).
	\]
\end{theorem}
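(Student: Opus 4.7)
The plan is to reduce the claim to Schenzel's Theorem \ref{Schenzel} by comparing dimension formulas. The two right-hand sides differ by precisely the $j=i$ term $-\binom{d}{i}\beta_{i-1}(\Delta)$, and since $\Theta\field[\Delta]\subseteq \Sigma(\Theta;\field[\Delta])$, the quotient $\Sigma(\Theta;\field[\Delta])/\Theta\field[\Delta]$ has a well-defined graded Hilbert function and the statement is equivalent to
\[
\dim_\field\bigl(\Sigma(\Theta;\field[\Delta])/\Theta\field[\Delta]\bigr)_i \;=\; \binom{d}{i}\beta_{i-1}(\Delta).
\]
To handle the right-hand side I would set $\Sigma_s := (\theta_1,\ldots,\hat\theta_s,\ldots,\theta_d)\field[\Delta]:_{\field[\Delta]}\theta_s$, so that $\Sigma(\Theta;\field[\Delta]) = \Theta\field[\Delta] + \sum_{s=1}^d \Sigma_s$.

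The Buchsbaum hypothesis is the main engine. Since any permutation of an l.s.o.p. is again an l.s.o.p., placing $\theta_s$ last gives $\Sigma_s = (\theta_1,\ldots,\hat\theta_s,\ldots,\theta_d)\field[\Delta]:_{\field[\Delta]}\mideal$, and hence
\[
\Sigma_s/(\theta_1,\ldots,\hat\theta_s,\ldots,\theta_d)\field[\Delta] \;\cong\; H^0_\mideal\!\bigl(\field[\Delta]/(\theta_1,\ldots,\hat\theta_s,\ldots,\theta_d)\field[\Delta]\bigr).
\]
By the socle decomposition of a Buchsbaum reduction (the \cite[Proposition II.4.14']{StVo}-type isomorphism already used in the proof of Theorem \ref{SchenzelAnalog}), each such $H^0_\mideal$ decomposes as a graded vector space into $\bigoplus_{i=0}^{d-1}\binom{d-1}{i}H^i_\mideal(\field[\Delta])[-i]$. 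Amalgamating the $d$ contributions and passing to the quotient by the full $\Theta\field[\Delta]$ (rather than by the partial system missing $\theta_s$), the expected output is the graded vector space isomorphism
\[
\Sigma(\Theta;\field[\Delta])/\Theta\field[\Delta] \;\cong\; \bigoplus_{i\ge 0}\binom{d}{i}\,H^i_\mideal(\field[\Delta])[-i].
\]
Combined with the Buchsbaum vanishing $\mideal\cdot H^i_\mideal(\field[\Delta])=0$ for $i<d$ and the $U=0$ case of Theorem \ref{Reisner}, which gives $\dim_\field H^i_\mideal(\field[\Delta])_0 = \beta_{i-1}(\Delta)$, the $i$-th graded piece of the right-hand side is exactly $\binom{d}{i}\beta_{i-1}(\Delta)$, as required.

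The main obstacle is the amalgamation step: the naive sum $\sum_s\Sigma_s$ over-counts elements lying in several $\Sigma_s$ simultaneously, and one must distinguish the "new" contribution of each $\Sigma_s$ from overlaps that encode lower local cohomology seen through different missing-generator lenses. In particular, as $\Theta\field[\Delta]\not\subseteq \Sigma_s$, one cannot simply reduce modulo $\theta_s$ after identifying each $\Sigma_s$ with a socle. I would resolve this by inducting on $d$: relating $\Sigma(\theta_1,\ldots,\theta_d;\field[\Delta])/\Theta\field[\Delta]$ to $\Sigma(\theta_1,\ldots,\theta_{d-1};\field[\Delta])/(\theta_1,\ldots,\theta_{d-1})\field[\Delta]$ through an appropriate short exact sequence extracted from multiplication by $\theta_d$, and using that $\field[\Delta]/\theta_d\field[\Delta]$ fails to be Buchsbaum only through its own $H^0_\mideal$, which is computable from the local cohomology of $\field[\Delta]$ by the very same \cite[Proposition II.4.14']{StVo}-type identification. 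The Pascal identity $\binom{d-1}{i}+\binom{d-1}{i-1}=\binom{d}{i}$ then closes the induction.
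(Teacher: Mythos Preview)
The paper does not supply its own proof of Theorem~\ref{BBMD}: the result is quoted from \cite[Theorem~1.2]{BBMD}, and the graded isomorphism
\[
\Sigma(\Theta;\field[\Delta])/\Theta\field[\Delta]\;\cong\;\bigoplus_{i=0}^{d-1}\binom{d}{i}\,H^i_\mideal(\field[\Delta])[-i]
\]
that you correctly isolate as the crux is precisely \cite[Theorem~2.3]{BBMD}, which the paper invokes again (still without proof) for Proposition~\ref{sigmaModuleProp}. Your reduction---subtract Schenzel's formula to reduce to $\dim_\field(\Sigma/\Theta\field[\Delta])_i=\binom{d}{i}\beta_{i-1}(\Delta)$, then read this off from the displayed isomorphism together with $\mideal\cdot H^i_\mideal(\field[\Delta])=0$ and the $j=0$ case of Hochster---is exactly how the statement follows once that isomorphism is in hand. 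So at the level of strategy you are aligned with what the paper cites.

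Where your sketch thins out is the inductive proof of the isomorphism itself. The expression $\Sigma(\theta_1,\ldots,\theta_{d-1};\field[\Delta])$ is not well-posed, since $\theta_1,\ldots,\theta_{d-1}$ is not a system of parameters for $\field[\Delta]$; what you presumably intend is to pass to $\overline{M}=\field[\Delta]/\theta_d\field[\Delta]$ and apply the $(d-1)$-dimensional statement there. The obstacle is that $\overline{M}$ is in general only quasi-Buchsbaum (one retains $\mideal\cdot H^i_\mideal(\overline{M})=0$, but the stronger colon condition defining Buchsbaumness can fail), and the sigma-module decomposition genuinely requires the full Buchsbaum hypothesis. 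Your phrase ``fails to be Buchsbaum only through its own $H^0_\mideal$'' does not name a property strong enough to run the induction. The published arguments (Goto's original, and the version in \cite{BBMD}) avoid this by working directly with the surjectivity of the canonical maps $\Ext^i_A(\field,M)\to H^i_\mideal(M)$ that characterizes Buchsbaum modules, rather than by descent on dimension. Since the paper defers entirely to \cite{BBMD}, you have already provided more than it does; but the amalgamation step is, as you anticipated, where the real content lies, and the induction as outlined will not close without that extra input.
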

Of course, we would like for analogous statements to hold for the $(\ZZ\times G)$-graded Hilbert series of $\field[\Delta]/\Sigma(\Theta; \field[\Delta])$. In order to even consider this series, we must first verify that this module is in fact $(\ZZ\times G)$-graded by establishing that the sigma module is fixed by the action of $G$. This is accomplished by the following lemma, whose proof is nearly immediate and has been omitted.

\begin{lemma}
	Let $\Delta$ be a Buchsbaum simplicial complex with a free action by $G$. If $\Theta=(\theta_1, \ldots, \theta_d)$ is an l.s.o.p. for $\Delta$ and each $\theta_i$ is homogeneous with respect to the $(\ZZ\times G)$-grading of $A$, then the sigma module $\Sigma (\Theta; \field[\Delta])$ is fixed by the action of $G$.
\end{lemma}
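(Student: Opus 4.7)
The plan is to check $G$-stability of $\Sigma(\Theta;\field[\Delta])$ piece-by-piece, exploiting the fact that every $\theta_i$ is a $G$-eigenvector. Concretely, since each $\theta_i$ is $(\ZZ\times G)$-homogeneous and $G=\ZZ/p\ZZ$ acts through characters on the graded components, we have $g\cdot \theta_i=\zeta^{\delta_i}\theta_i$ for some $\delta_i\in\{0,1,\dots,p-1\}$. Because $\zeta^{\delta_i}$ is a unit in $\field$, the ideal $(\theta_1,\ldots,\widehat{\theta_i},\ldots,\theta_d)\field[\Delta]$ is identical to $(g\cdot\theta_1,\ldots,\widehat{g\cdot\theta_i},\ldots,g\cdot\theta_d)\field[\Delta]$, and multiplication by $\theta_i$ and by $g\cdot\theta_i$ differ only by the unit $\zeta^{\delta_i}$.

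With this in hand, I would handle the two summands of $\Sigma(\Theta;\field[\Delta])$ separately. For the piece $\Theta\field[\Delta]$, an arbitrary element is $\sum_i \theta_i r_i$; applying $g$ and using that $G$ acts by $\field$-algebra automorphisms yields $\sum_i \zeta^{\delta_i}\theta_i(g\cdot r_i)$, which visibly still lies in $\Theta\field[\Delta]$. For each colon submodule $(\theta_1,\ldots,\widehat{\theta_i},\ldots,\theta_d)\field[\Delta]:_{\field[\Delta]}\theta_i$, if $\theta_i x=\sum_{j\ne i}\theta_j r_j$, then applying $g$ gives $\zeta^{\delta_i}\theta_i(g\cdot x)=\sum_{j\ne i}\zeta^{\delta_j}\theta_j(g\cdot r_j)$; since $\zeta^{\delta_i}$ is invertible, this exhibits $\theta_i(g\cdot x)$ as an element of $(\theta_1,\ldots,\widehat{\theta_i},\ldots,\theta_d)\field[\Delta]$, hence $g\cdot x$ lies in the same colon submodule. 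Summing these stable subspaces then gives $G$-stability of the whole sigma module.

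There is no genuine obstacle here: everything reduces to (i) $G$ acts on $\field[\Delta]$ by $\field$-algebra automorphisms, and (ii) each $\theta_i$ is a scalar multiple of itself under the action, so the ideals generated by (subsets of) the $\theta_i$'s are $G$-invariant and colon computations transport along $g$ up to units. This is exactly why the authors regard the proof as nearly immediate; the only thing one must be careful about is keeping track of the scalars $\zeta^{\delta_i}$ when moving $g$ past $\theta_i$, which is handled uniformly by the eigenvector property.
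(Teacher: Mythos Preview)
Your proposal is correct. The paper actually omits the proof entirely, remarking only that it is ``nearly immediate,'' and your argument---checking that $\Theta\field[\Delta]$ and each colon submodule $(\theta_1,\ldots,\widehat{\theta_i},\ldots,\theta_d)\field[\Delta]:_{\field[\Delta]}\theta_i$ are $G$-stable because each $\theta_i$ is a $G$-eigenvector---is exactly the straightforward verification the authors have in mind.
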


Fortunately, the proof of our needed aspects of \cite[Theorem 2.3]{BBMD} goes through essentially verbatim in the $(\ZZ\times G)$-graded case, yielding the following proposition (though the stated isomorphism is not necessarily a $\field[G]$-module isomorphism, the graded pieces still go where we want them to go).

\begin{proposition}\label{sigmaModuleProp}Let $\Delta$ be a $(d-1)$-dimensional Buchsbaum simplicial complex admitting a free action by $G$ and let $\Theta=(\theta_1, \ldots, \theta_d)$ be an l.s.o.p. for $\Delta$ with $\theta_i\in A_1^m$ for all $i$. Then
	\[
	\Sigma(\Theta;\field[\Delta])/\Theta\field[\Delta]\cong\bigoplus_{i=0}^{d-1}\left(\bigoplus_{{d\choose i}}H_\mideal^{i}(\field[\Delta])[-i, -im]\right)
	\]
	as vector spaces, and this isomorphism respects the $(\ZZ\times G)$-grading. In particular,
	\[
\dim_{\field}\left(\Sigma(\Theta; \field[\Delta])/\Theta\field[\Delta]\right)_i^k={d\choose i}\beta_{i-1}(\Delta)^{mi-k}.
	\]

\end{proposition}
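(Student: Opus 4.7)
The plan is to transcribe the proof of \cite[Theorem 2.3]{BBMD} into the $(\ZZ \times G)$-graded setting, noting, as the authors remark above, that the final isomorphism is one of bigraded vector spaces rather than of $\field[G]$-modules. The foundation is to verify that each structural piece of their argument --- the short exact sequences, the identification of $H^0$ of successive quotients with sums of higher local cohomologies, and the final assembly --- is $(\ZZ \times G)$-graded once one inserts the bigraded shifts forced by each $\theta_s$ being homogeneous of bidegree $(1, m)$.

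Step one is the bigraded analog of the fundamental four-term sequence used in the proof of Theorem \ref{SchenzelAnalog}: since each $\theta_s$ has bidegree $(1, m)$, the Buchsbaum condition (applied as a statement about bigraded submodules of $\field[\Delta]$) gives
\[
0 \to H^0_\mideal(\field_{s-1}[\Delta])[-1, -m] \to \field_{s-1}[\Delta][-1, -m] \xrightarrow{\cdot \theta_s} \field_{s-1}[\Delta] \to \field_s[\Delta] \to 0.
\]
Iterating via \cite[Proposition II.4.14']{StVo} --- whose proof is built entirely out of connecting homomorphisms of such sequences, hence respects the bigrading --- yields
\[
H^0_\mideal(\field_{s-1}[\Delta]) \cong \bigoplus_{i=0}^{s-1}\bigoplus_{\binom{s-1}{i}} H^i_\mideal(\field[\Delta])[-i, -im]
\]
as $(\ZZ \times G)$-graded vector spaces, exactly as used (implicitly, for Hilbert-series purposes) in the proof of Theorem \ref{SchenzelAnalog}.

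Step two is to adapt the core of the BBMD argument. In that proof, one constructs an explicit map from a sum of $H^0$-modules to $\Sigma(\Theta; \field[\Delta])/\Theta\field[\Delta]$ by lifting a class in $H^0_\mideal(\field_{s-1}[\Delta])$ to a representative in $\field[\Delta]$ and multiplying by an appropriate product of $\theta_j$'s, then shows via the Buchsbaum conditions that the resulting assembled map is a bijection. Every operation in this construction --- choosing a $(\ZZ \times G)$-homogeneous lift (possible because all modules in sight are bigraded) and multiplying by bigraded-homogeneous $\theta_j$'s --- preserves the $(\ZZ \times G)$-grading, although the chosen lifts need not be $G$-equivariant. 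Combined with Step 1 and a hockey-stick reorganization, this produces the asserted isomorphism of $(\ZZ \times G)$-graded vector spaces.

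Finally, the dimension formula is immediate from Theorem \ref{ReisnerAnalog}: by the Buchsbaum hypothesis, $H^i_\mideal(\field[\Delta])$ for $i < d$ is $\mideal$-torsion and hence concentrated in $\ZZ$-degree $0$, and Theorem \ref{ReisnerAnalog} applied with $j = 0$ gives $\dim_\field H^i_\mideal(\field[\Delta])_0^{k} = \beta_{i-1}(\Delta)^{-k}$. Reading off the $(i, k)$-component of $H^i_\mideal(\field[\Delta])[-i, -im]$ produces $\beta_{i-1}(\Delta)^{im-k}$, and multiplying by $\binom{d}{i}$ gives the claim. The main obstacle, beyond notational overhead in tracking bigrading shifts, is verifying that all of the BBMD maps remain well-defined bigraded maps even though they may fail to be $G$-equivariant; this reduces to the observation that one can always select bigraded-homogeneous lifts, which is automatic because every module appearing in the argument inherits its bigrading from $\field[\Delta]$.
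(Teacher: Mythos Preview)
Your proposal is correct and follows essentially the same approach as the paper, which does not write out a proof but simply notes that ``the proof of our needed aspects of \cite[Theorem 2.3]{BBMD} goes through essentially verbatim in the $(\ZZ\times G)$-graded case.'' You have supplied precisely that verbatim transcription, including the bigraded four-term sequences, the appeal to \cite[Proposition II.4.14']{StVo}, and the dimension computation via Theorem \ref{ReisnerAnalog}; your care in distinguishing bigraded-homogeneous lifts from $G$-equivariant lifts matches the paper's parenthetical caveat that the isomorphism need not be one of $\field[G]$-modules.
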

This immediately establishes the following extension of Theorem \ref{SchenzelAnalog}:
\begin{theorem}\label{SigmaModuleThm}Let $\Delta$ be a $(d-1)$-dimensional Buchsbaum simplicial complex admitting a free action by $G$ and let $\Theta$ be a $G$-homogeneous l.s.o.p. for $\field[\Delta]$ such that $\theta_i\in\field[\Delta]_1^m$ for all $i$. Then
\begin{align*}
\Hilb(\field[\Delta]/\Sigma(\Theta;\field[\Delta]), \lambda, t)=&\sum_{i=0}^d\left[(-1)^i{d\choose i}t^{mi}+\left(\frac{1}{p}\sum_{k=0}^{p-1}t^k\right)\left(h_i(\Delta)+(-1)^{i+1}{d\choose i}\right)\right]\lambda^i\\
&+\sum_{i=0}^d{d\choose i}\lambda^i\sum_{j=0}^{i}(-1)^{i-j-1}\left(\sum_{k=0}^{p-1}t^k\beta_{j-1}(\Delta)^{mi-k} \right).
\end{align*}
\end{theorem}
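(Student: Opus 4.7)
The plan is to deduce this theorem directly from Theorem \ref{SchenzelAnalog} and Proposition \ref{sigmaModuleProp} via the short exact sequence of $(\ZZ \times G)$-graded vector spaces
\[
0 \to \Sigma(\Theta;\field[\Delta])/\Theta\field[\Delta] \to \field[\Delta]/\Theta\field[\Delta] \to \field[\Delta]/\Sigma(\Theta;\field[\Delta]) \to 0.
\]
This sequence exists because $\Theta\field[\Delta] \subseteq \Sigma(\Theta;\field[\Delta])$ by definition of the sigma module, and it is one of $(\ZZ \times G)$-graded vector spaces by the preceding lemma, which ensures that $\Sigma(\Theta;\field[\Delta])$ is fixed by the action of $G$ and hence all three modules inherit a $(\ZZ \times G)$-grading. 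Additivity of Hilbert series on short exact sequences then gives
\[
\Hilb(\field[\Delta]/\Sigma(\Theta;\field[\Delta]), \lambda, t) = \Hilb(\field(\Delta;\Theta), \lambda, t) - \Hilb(\Sigma(\Theta;\field[\Delta])/\Theta\field[\Delta], \lambda, t).
\]

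The first term on the right is computed by Theorem \ref{SchenzelAnalog}. For the second, Proposition \ref{sigmaModuleProp} yields
\[
\Hilb(\Sigma(\Theta;\field[\Delta])/\Theta\field[\Delta], \lambda, t) = \sum_{i=0}^d {d\choose i}\lambda^i \sum_{k=0}^{p-1} t^k \beta_{i-1}(\Delta)^{mi-k}.
\]
I would then observe that subtracting this expression from the formula in Theorem \ref{SchenzelAnalog} amounts to adjoining a $j=i$ summand to the inner sum indexed by $j$, since $(-1)^{i-i-1} = -1$ makes the new term equal to minus the subtracted quantity. After this bookkeeping, the upper limit of the inner sum becomes $i$ rather than $i-1$, which is exactly the formula asserted in the theorem. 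There is no substantive obstacle here, as the deep content is already contained in Theorem \ref{SchenzelAnalog} and Proposition \ref{sigmaModuleProp}; the only care needed is to confirm that the short exact sequence respects the full $(\ZZ \times G)$-grading, which is precisely the role of the lemma preceding Proposition \ref{sigmaModuleProp}.
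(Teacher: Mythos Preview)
Your proposal is correct and matches the paper's approach exactly: the paper simply states that Proposition \ref{sigmaModuleProp} ``immediately establishes'' Theorem \ref{SigmaModuleThm}, and what you have written is precisely the short exact sequence argument that makes this immediate. Your observation that the subtraction amounts to extending the inner sum's upper index from $i-1$ to $i$ is the only bookkeeping required.
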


\subsection{Inequalities}\label{Inequalities}
Theorem \ref{BBMD} implies the following inequality bounding the $h$-vector of any Buchsbaum simplicial complex $\Delta$:
\begin{equation}\label{buchsbaumInequality}
h_i(\Delta)\ge {d\choose i}\sum_{j=0}^i(-1)^{i-j}\beta_{j-1}(\Delta).
\end{equation}
For Cohen-Macaulay complexes admitting a very free action by $G$, bounds on the $h$-vector may be obtained through Theorem \ref{StanleyCMVeryFree}.
In this section we present some similar inequalities by exploiting the $(\ZZ\times G)$-graded Hilbert series of Buchsbaum complexes admitting free group actions by $G$. To begin, setting $m=0$ and examining the $\lambda^it^{k}$ coefficients in Theorem \ref{SigmaModuleThm} immediately allows us to bound the $h$-vector of $\Delta$ by
\begin{equation}\label{zeropart}
	h_i(\Delta)\ge(p-1)(-1)^{i+1}{d\choose i}+p{d\choose i}\sum_{j=0}^i(-1)^{i-j}\beta_{j-1}(\Delta)^0
\end{equation}
and
\begin{equation}\label{nonzeropart}
	h_i(\Delta)\ge(-1)^{i}{d\choose i}+p{d\choose i}\sum_{j=0}^i(-1)^{i-j}\beta_{j-1}(\Delta)^k
\end{equation}
for $k\not\equiv 0 \mod p$.

\begin{remark}
	Note that the $\beta_{j-1}(\Delta)^0$ values have been singled out in these inequalities and that they appear to have a separate influence from any particular $\beta_{j-1}(\Delta)^k$ with $k\not\equiv 0 \mod p$. This will be a recurring theme throughout this section, and we will see that this $G$-invariant component of $\widetilde{H}^i(\Delta)$ has a special significance (see equation (\ref{homologyOfQuotient})).
\end{remark}
	The inequalities above provide an immediate ``permutable'' version of (\ref{buchsbaumInequality}) that is worth recognizing.
\begin{corollary}\label{multiset}Let $\Delta$ be a $(d-1)$-dimensional Buchsbaum simplicial complex admitting a free action by the group $\ZZ/p\ZZ$. If $\mathcal{M}$ is a multiset of size $p-1$ on $\{1, \ldots, p-1\}$, then
	\[
	h_i(\Delta)\ge{d\choose i}\sum_{j=0}^i(-1)^{i-j}\beta_{j-1}(\Delta)^0+{d\choose i}\sum_{m\in\mathcal{M}}\sum_{j=0}^i(-1)^{i-j}\beta_{j-1}(\Delta)^m.
	\]
\end{corollary}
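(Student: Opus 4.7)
The plan is to obtain the bound by averaging the $p$ inequalities already at our disposal, namely one copy of (\ref{zeropart}) together with the $p-1$ copies of (\ref{nonzeropart}) indexed by the elements of $\mathcal{M}$. Since every element of $\mathcal{M}$ lies in $\{1, \ldots, p-1\}$, each such $m$ satisfies $m \not\equiv 0 \pmod{p}$, so (\ref{nonzeropart}) is legitimately applicable with $k = m$ for each $m \in \mathcal{M}$ (with multiplicity if $m$ is repeated in $\mathcal{M}$).

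Summing the resulting $p$ inequalities produces $p\cdot h_i(\Delta)$ on the left-hand side. On the right-hand side, the ``constant'' terms (those not involving refined Betti numbers) contribute $(p-1)(-1)^{i+1}{d\choose i}$ from the single copy of (\ref{zeropart}) and $(p-1)(-1)^{i}{d\choose i}$ from the $p-1$ copies of (\ref{nonzeropart}). These precisely cancel, since $(-1)^{i+1} + (-1)^i = 0$. What remains is $p{d\choose i}$ times the expression
\[
\sum_{j=0}^i(-1)^{i-j}\beta_{j-1}(\Delta)^0 + \sum_{m \in \mathcal{M}}\sum_{j=0}^i(-1)^{i-j}\beta_{j-1}(\Delta)^m.
\]
Dividing both sides by $p$ yields the claimed inequality.

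There is no substantive obstacle here beyond careful bookkeeping; the whole argument is a one-line averaging. The only point worth emphasizing is that the size of $\mathcal{M}$ is required to equal $p-1$, which is exactly the number of copies of (\ref{nonzeropart}) needed to cancel the constant contribution of the single copy of (\ref{zeropart}). Had $|\mathcal{M}|$ been anything else, one would be left with a residual term of the form $c\cdot{d\choose i}$ with $c \ne 0$, preventing the inequality from taking this clean form.
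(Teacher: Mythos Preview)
Your argument is correct and matches the paper's own proof exactly: add one copy of (\ref{zeropart}) to the $p-1$ copies of (\ref{nonzeropart}) indexed by $\mathcal{M}$, observe the constant terms cancel, and divide by $p$. You have simply spelled out the bookkeeping that the paper compresses into a single sentence.
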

\begin{proof}
For each $m\in \mathcal{M}$, consider the corresponding inequality (\ref{nonzeropart}). Add all of the inequalities together along with (\ref{zeropart}), then divide by $p$.
\end{proof}
Note that by taking $\mathcal{M}=[p-1]$, we obtain the original bound (\ref{buchsbaumInequality}). On the other hand, a direct examination of inequalities (\ref{zeropart}) and (\ref{nonzeropart}) proves the following extension of Theorem \ref{StanleyCMVeryFree}.

\begin{corollary}Let $\Delta$ be a $(d-1)$-dimensional Buchsbaum simplicial complex admitting a free action by $G$. If $\beta_i(\Delta)=0$ for $i\le j$, then the inequalities of Theorem \ref{StanleyCMVeryFree} hold for $h_i(\Delta)$ when $i\le j+2$.
\end{corollary}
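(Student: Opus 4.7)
The plan is to derive the corollary directly from the inequalities (\ref{zeropart}) and (\ref{nonzeropart}), specialized to $m=0$, combined with the observation that the hypothesis $\beta_i(\Delta)=0$ for $i\le j$ forces every isotypic component $\beta_i(\Delta)^k$ to vanish in the same range (since $\widetilde{H}^i(\Delta)^k$ is a $\field$-subspace of $\widetilde{H}^i(\Delta)$). By Remark \ref{freeactionlsop}, after passing to a suitable extension of $\CC$, one may choose a $G$-homogeneous l.s.o.p. $\Theta$ with every $\theta_i\in \field[\Delta]_1^0$, so that both inequalities are available with $m=0$.

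Next, for fixed $i\le j+2$ and any residue $k$, I would inspect the alternating sum $\sum_{\ell=0}^i(-1)^{i-\ell}\beta_{\ell-1}(\Delta)^k$ that appears in (\ref{zeropart}) and (\ref{nonzeropart}). Since $\ell-1$ ranges over $\{-1,0,\ldots,i-1\}$ and $i-1\le j+1$, the only index that could contribute is $\ell-1=j+1$, i.e.\@ $\ell=j+2$, which requires $i=j+2$. For $i\le j+1$ the entire sum vanishes; for $i=j+2$ it reduces to the single non-negative term $(-1)^0\beta_{j+1}(\Delta)^k=\beta_{j+1}(\Delta)^k\ge 0$. Here I use $\beta_{-1}(\Delta)^k=0$, which holds because $\Delta$ is nonempty (implied by the existence of a free $G$-action on a complex of positive dimension). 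Either way, the correction term added to $(p-1)(-1)^{i+1}\binom{d}{i}$ in (\ref{zeropart}) and to $(-1)^i\binom{d}{i}$ in (\ref{nonzeropart}) is non-negative.

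The corollary then follows by a parity check. When $i$ is odd, $(-1)^{i+1}=1$ and (\ref{zeropart}) yields $h_i(\Delta)\ge (p-1)\binom{d}{i}$. When $i$ is even, $(-1)^i=1$ and (\ref{nonzeropart}), applied with any fixed $k\not\equiv 0 \pmod p$, yields $h_i(\Delta)\ge \binom{d}{i}$. These are precisely the bounds of Theorem \ref{StanleyCMVeryFree}. No step is genuinely obstructive: the main content was packaged earlier in Theorem \ref{SigmaModuleThm}, and the only care required is in bookkeeping the signs and making sure to invoke (\ref{zeropart}) for odd $i$ and (\ref{nonzeropart}) for even $i$, since each inequality on its own delivers only half of the desired bound.
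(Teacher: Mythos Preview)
Your proof is correct and takes essentially the same approach as the paper, which simply states that the corollary follows from a direct examination of inequalities (\ref{zeropart}) and (\ref{nonzeropart}). You have carefully fleshed out the bookkeeping---observing that the vanishing hypothesis kills all but possibly the top term of the alternating sum (which enters with a positive sign), and then invoking (\ref{zeropart}) for odd $i$ and (\ref{nonzeropart}) for even $i$---exactly as intended.
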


In fact, if $\Delta$ is also simply connected, then one of these bounds may be pushed to even higher indices by using some tools from algebraic topology.

\begin{proposition}Let $\Delta$ be a simply-connected Buchsbaum simplicial complex admitting a free action by $G$. If $H_i(\Delta)=0$ for $1\le i\le r$, then 
	\[
	h_i(\Delta)\ge (-1)^i{d\choose i}
	\]
	for $1\le i\le 2r+2$.
\end{proposition}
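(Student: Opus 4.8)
\textit{Reduction to a topological statement.} The plan is to convert the bounds already distilled from Theorem~\ref{SigmaModuleThm} into a statement about the quotient $|\Delta|/G$. Taking $m=0$ in Theorem~\ref{SigmaModuleThm}, the nonnegativity of $\dim_\field\bigl(\field[\Delta]/\Sigma(\Theta;\field[\Delta])\bigr)_i^k$ is exactly inequalities~(\ref{zeropart}) and~(\ref{nonzeropart}). The hypothesis $H_i(\Delta)=0$ for $1\le i\le r$ forces $\beta_l(\Delta)=0$, hence $\beta_l(\Delta)^k=0$ for every $k$, in the range $0\le l\le r$ (and $\beta_{-1}(\Delta)=0$ always); so the Corollary immediately preceding this Proposition already gives the assertion for $1\le i\le r+2$, and I only have to treat $r+3\le i\le 2r+2$. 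For such $i$, (\ref{nonzeropart}) shows it suffices to exhibit a residue $k\not\equiv 0\pmod p$ with
\[
\sum_{j=0}^{i}(-1)^{i-j}\beta_{j-1}(\Delta)^{k}\ \ge\ 0 ,
\]
and the vanishing just recorded means this alternating sum only involves the refined Betti numbers $\beta_l(\Delta)^k$ with $r+1\le l\le i-1\le 2r+1$. In the residual odd-$i$ cases I would keep (\ref{zeropart}) in reserve, since there one only needs the $k=0$ alternating sum to be $\ge-1$.

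\textit{Bringing in the covering space.} Since $G$ acts freely and $|\Delta|$ is simply connected, $|\Delta|\to|\Delta|/G$ is the universal covering and $\pi_1(|\Delta|/G)\cong G$; by the Hurewicz theorem the homology hypothesis makes $|\Delta|$ genuinely $r$-connected, so $\pi_i(|\Delta|/G)=0$ for $2\le i\le r$ and the classifying map $f\colon|\Delta|/G\to BG$ is $(r+1)$-connected. Moreover the topological content behind Theorem~\ref{ReisnerAnalog} (compare the identification of the $G$-invariant part of $\widetilde H^{\ast}(\Delta)$ with the cohomology of $|\Delta|/G$) identifies $\beta_l(\Delta)^{\chi}$ with $\dim_\field H^l(|\Delta|/G;\mathcal L_\chi)$, the cohomology of the finite $(d-1)$-dimensional complex $|\Delta|/G$ with coefficients in the flat line bundle $\mathcal L_\chi$ attached to $\chi$.

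\textit{Controlling the twisted Betti numbers.} It remains to show that, for a finite $(d-1)$-dimensional complex $X$ with $\pi_1(X)\cong\mathbb Z/p\mathbb Z$ whose universal cover $\widetilde X$ is $r$-connected, the twisted Betti numbers $\dim H^l(X;\mathcal L_\chi)$ behave well enough through degree $2r+1$ to make the alternating sums above nonnegative. Here I would run the Leray--Serre (Cartan--Leray) spectral sequence of $\widetilde X\to X\to BG$ -- equivalently, for $p=2$, the transfer/Gysin sequence of the double cover and its higher-$p$ analogue: $r$-connectivity of $\widetilde X$ kills rows $q=1,\dots,r$ of the $E_2$-page, so the bottom row $E_2^{\ast,0}=H^{\ast}(BG;-)$ survives unaltered through total degree $r+1$ and can first be hit only by $d_{r+2}$ from row $q=r+1$; since $H^{\ast}(BG;\mathbb Z/p\mathbb Z)$ is nonzero in every degree (periodicity of $H^{\ast}(\mathbb Z/p\mathbb Z)$), finite-dimensionality of $X$ forces enough cohomology of $X$ in degrees between $r+1$ and $2r+1$ that the alternating partial sums of the refined Betti numbers cannot change to the wrong sign before degree $2r+2$. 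Feeding the resulting inequality back into (\ref{nonzeropart}) -- or (\ref{zeropart}) in the leftover odd cases -- produces $h_i(\Delta)\ge(-1)^i\binom di$ for all $1\le i\le 2r+2$.

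\textit{Where the difficulty lies.} I expect the last step to be the real obstacle. The spectral-sequence input is most naturally phrased with $\mathbb Z/p\mathbb Z$-coefficients on $|\Delta|/G$, whereas (\ref{nonzeropart}) and (\ref{zeropart}) involve the characteristic-zero dimensions $\beta_l(\Delta)^k$, so the bookkeeping -- which differentials can be nonzero, the associated extension problems, and the passage between $\mathbb Z/p\mathbb Z$- and $\field$-coefficient data -- has to be carried out carefully, and it is exactly this bookkeeping that should explain why the range stops at $2r+2$ rather than continuing further. A minor additional point is to organize, uniformly in the parity of $i$, which character $k$ and which of the two inequalities to invoke as $i$ ranges over $r+3,\dots,2r+2$.
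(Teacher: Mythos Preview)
Your reduction to inequality~(\ref{nonzeropart}) is right, and so is the identification of $\beta_l(\Delta)^\chi$ with $\dim_\field H^l(|\Delta|/G;\mathcal L_\chi)$. The gap is in the topological step. The hypothesis $H_i(\Delta)=0$ is stated over the field $\field\supset\CC$, so it only gives \emph{rational} vanishing; it does not make $|\Delta|$ integrally $r$-connected, and hence your claim that the classifying map $\Gamma\to BG$ is $(r{+}1)$-connected is unjustified. More seriously, the spectral-sequence route you propose cannot close: over $\field$ the group $G$ is finite, so $H^p(BG;V)=0$ for every $p>0$ and every $\field[G]$-module $V$, and the Serre (Cartan--Leray) spectral sequence for $|\Delta|\to\Gamma\to BG$ collapses at $E_2$, merely reproducing $H^n(\Gamma;\mathcal L_\chi)\cong H^n(|\Delta|)^{\chi^{-1}}$ with no further constraint. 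Passing to $\ZZ/p\ZZ$-coefficients does make $H^*(BG)$ periodic, but then you are computing mod-$p$ cohomology of $\Gamma$, not the characteristic-zero numbers $\beta_l(\Delta)^k$ that feed into~(\ref{nonzeropart}); the ``bookkeeping'' you flag is not a detail but a genuine mismatch.

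The paper bypasses all of this with the \emph{rational} Hurewicz theorem. Since $|\Delta|$ is simply connected with $H_i(|\Delta|;\QQ)=0$ for $1\le i\le r$, rational Hurewicz gives $\pi_i(|\Delta|)\otimes\QQ\cong H_i(|\Delta|;\QQ)$ for $i\le 2r$; the covering identifies $\pi_i(|\Delta|)\cong\pi_i(\Gamma)$ for $i\ge 2$, and applying rational Hurewicz to $\Gamma$ (whose fundamental group is finite, hence rationally trivial) yields $H_i(|\Delta|;\QQ)\cong H_i(\Gamma;\QQ)$ for $i\le 2r$. Together with~(\ref{homologyOfQuotient}) this forces $H^i(|\Delta|)=H^i(|\Delta|)^0$, i.e.\ $\beta_i(\Delta)^k=0$ for all $k\not\equiv 0\pmod p$ and $1\le i\le 2r$. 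Every term in the alternating sum of~(\ref{nonzeropart}) then vanishes for $i\le 2r+1$, and at $i=2r+2$ the lone surviving term $\beta_{2r+1}(\Delta)^k$ enters with sign $+1$; the inequality $h_i(\Delta)\ge(-1)^i\binom{d}{i}$ follows at once. The cutoff $2r+2$ thus comes directly from the doubling in rational Hurewicz, not from any mod-$p$ spectral-sequence analysis.
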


\begin{proof}Identify $\Delta$ with its geometric realization $|\Delta|$, and set $\Gamma= |\Delta|/G$. Since $G$ acts freely on $\Delta$, the projection $\pi:|\Delta|\to \Gamma$ is a covering space map. By \cite[Proposition 3G.1]{Hatcher},
	\begin{equation}\label{homologyOfQuotient}
	H^i(|\Delta|)^0=H^i(|\Delta|)^G\cong H^i(\Gamma).
	\end{equation}
	If $b$ is a point in $\Gamma$ and $\epsilon$ is the space containing the set of $p$ discrete points $\pi^{-1}(b)$, then $\epsilon\to|\Delta|\to\Gamma$ is a fiber bundle ($|\Delta|$ is a $p$-sheeted covering space for $\Gamma$). Since $\epsilon$ consists of $p$ points,  the long exact sequence of homotopy groups associated to the fiber bundle (see \cite[Theorem 4.41]{Hatcher}) splits into isomorphisms
	\[
	\pi_i(|\Delta|)\cong\pi_i(\Gamma)
	\]
	for $i>1$. Since $\beta_i(\Delta)=0$ for $1\le i\le r$, combining the relative Hurewicz theorem (\cite[Theorem 1]{KK-Hurewicz}) with the isomorphisms above yields
	\[
	H_i(|\Delta|; \QQ)\cong \pi_i(|\Delta|)\otimes \QQ\cong\pi_i(\Gamma)\otimes\QQ\cong H_i(\Gamma;\QQ)
	\]
	for $1\le i\le 2r$. By (\ref{homologyOfQuotient}), this implies that $H^i(|\Delta|)=H^i(|\Delta|)^0$ for $1\le i\le 2r$. In particular, $H^i(\Delta)^k=0$ for $k\not\equiv 0\mod p$. Now apply (\ref{nonzeropart}).
\end{proof}

As indicated by the previous proof, it may be worth examining the properties of the quotient $\Gamma:=\Delta/G$. To that end, we now present the following proposition.

\begin{proposition}Let $\Delta$ be a Buchsbaum simplicial complex admitting a free group action by $G$, and let $\Gamma$ be the quotient $\Delta/G$. If $\Gamma$ is a simplicial complex (or, more generally, a simplicial poset), then
\[
h_i(\Gamma)\ge (-1)^i{d\choose i}+{d\choose i}\sum_{j=0}^i(-1)^{i-j}\beta_{j-1}(\Delta)^k
\]
for $k\not\equiv 0 \mod p$.
\end{proposition}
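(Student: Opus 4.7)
The plan is to derive the inequality for $h_i(\Gamma)$ by first relating the face numbers (and hence the $h$-vectors) of $\Delta$ and $\Gamma$, and then combining this with inequality (\ref{nonzeropart}) applied to $\Delta$ itself. The hypothesis that $\Gamma$ is a simplicial complex or simplicial poset is needed only so that $h_i(\Gamma)$ is defined in the usual way via the $f$-vector.

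The first step is a straightforward orbit count: because $G$ acts freely on the set of nonempty faces of $\Delta$ and $\Gamma = \Delta/G$ is obtained by collapsing $G$-orbits, each nonempty face of $\Gamma$ has exactly $p$ preimages in $\Delta$. Consequently $f_{j-1}(\Delta) = p \cdot f_{j-1}(\Gamma)$ for all $j \geq 1$ (while $f_{-1}(\Delta) = f_{-1}(\Gamma) = 1$). Plugging this into the definition of the $h$-vector and isolating the $j=0$ contribution yields the identity
\[
h_i(\Delta) \;=\; p\,h_i(\Gamma) \;-\; (p-1)(-1)^i\binom{d}{i},
\]
or equivalently
\[
h_i(\Gamma) \;=\; \tfrac{1}{p}\,h_i(\Delta) \;+\; \tfrac{p-1}{p}(-1)^i\binom{d}{i}.
\]

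The second step is to apply inequality (\ref{nonzeropart}), which was already obtained from Theorem \ref{SigmaModuleThm} with $m=0$: for any $k\not\equiv 0\pmod p$,
\[
h_i(\Delta) \;\geq\; (-1)^i\binom{d}{i} \;+\; p\binom{d}{i}\sum_{j=0}^{i}(-1)^{i-j}\beta_{j-1}(\Delta)^k.
\]
Substituting this lower bound into the expression for $h_i(\Gamma)$ above, the coefficient $\tfrac{1}{p}(-1)^i\binom{d}{i}$ produced by the bound combines with $\tfrac{p-1}{p}(-1)^i\binom{d}{i}$ to give exactly $(-1)^i\binom{d}{i}$, while the factor of $p$ in front of the Betti sum cancels the $\tfrac{1}{p}$, yielding precisely
\[
h_i(\Gamma) \;\geq\; (-1)^i\binom{d}{i} \;+\; \binom{d}{i}\sum_{j=0}^{i}(-1)^{i-j}\beta_{j-1}(\Delta)^k.
\]

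There is no serious obstacle here; the computation is essentially a bookkeeping exercise once the face-counting relation is in hand, and the main conceptual content has already been extracted in the inequality (\ref{nonzeropart}). The one place where care is warranted is verifying that the formula $f_{j-1}(\Delta) = p\,f_{j-1}(\Gamma)$ remains valid when $\Gamma$ is allowed to be a simplicial poset rather than an honest complex; this follows from the fact that the free $G$-action on $\Delta$ forces the quotient map to be $p$-to-$1$ on each set of $(j-1)$-dimensional faces for $j \geq 1$, regardless of whether distinct $G$-orbits of faces in $\Delta$ have distinct vertex sets in $\Gamma$.
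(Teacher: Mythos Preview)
Your proof is correct and follows essentially the same approach as the paper: establish the identity $h_i(\Delta)=(p-1)(-1)^{i+1}\binom{d}{i}+p\,h_i(\Gamma)$ via the orbit count on faces, then combine with inequality (\ref{nonzeropart}) and divide by $p$. You have merely spelled out the ``straightforward calculation'' that the paper leaves to the reader.
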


\begin{proof}A straightforward calculation shows that
\begin{equation}\label{hVectorOfQuotient}
h_i(\Delta)=(p-1)(-1)^{i+1}{d\choose i}+p h_i(\Gamma).
\end{equation}
Now combine (\ref{nonzeropart}) with (\ref{hVectorOfQuotient}) and divide by $p$.
\end{proof}
As examples, the hypotheses above are satisfied when $\Delta$ is the order complex of a poset (\cite{Garsia}), or, more generally, when $\Delta$ is balanced and the action of $G$ is color-preserving (\cite{Reiner}).

\subsection{Dehn-Sommerville relations}
Here we present a new results akin to Theorem 1.4, relating the symmetric values of the dimensions $\dim_\field \left(\field[\Delta]/\Sigma(\Theta;\field[\Delta])\right)_i^j$. First, let $\tilde{\chi}(\Delta)$ denote the reduced Euler characteristic of a simplicial complex $\Delta$, defined by
\[
\tilde{\chi}(\Delta)=\sum_{j=-1}^{d-1}(-1)^j\beta_j(\Delta),
\]
where $d-1$ is the dimension of $\Delta$. We will also denote by $\chi(\Delta):=\tilde{\chi}(\Delta)+1$ the (unreduced) Euler characteristic of $\Delta$. If $\Delta$ is an orientable homology manifold, then Klee's formula 
\begin{equation}\label{KleesFormula}
h_{d-i}(\Delta)-h_i(\Delta)=(-1)^i{d\choose i}((-1)^{d-1}\tilde{\chi}(\Delta)-1)
\end{equation}
(found in \cite{Klee}) provides a relation between the symmetric entries in the $h$-vector of $\Delta$ in terms of $\tilde{\chi}(\Delta)$, abstracting the Dehn-Sommerville relations for simplicial polytopes. We will first examine the symmetric values of the dimensions of $\dim_\field \left(\field[\Delta]/\Sigma(\Theta;\field[\Delta])\right)_i^0$, in the case that $\Theta\subset A_1^0$.
\begin{theorem}\label{DSTheorem}
	Let $\Delta$ be a $(d-1)$-dimensional triangulation of a connected orientable manifold admitting a free group action by $G$. If $\Theta\subset A_1^0$ and $\widetilde{H}^{d-1}(\Delta)=\widetilde{H}^{d-1}(\Delta)^0$, then
	\begin{equation}\label{DS}
	\dim_\field \left(\field[\Delta]/\Sigma(\Theta;\field[\Delta])\right)_i^0=\dim_\field \left(\field[\Delta]/\Sigma(\Theta;\field[\Delta])\right)_{d-i}^0.
	\end{equation}
\end{theorem}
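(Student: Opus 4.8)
The strategy is to compute the dimensions $\dim_\field(\field[\Delta]/\Sigma(\Theta;\field[\Delta]))_i^0$ explicitly from Theorem~\ref{SigmaModuleThm} (with $m=0$), and then compare the expressions for index $i$ and index $d-i$ using the available duality relations: Klee's formula~(\ref{KleesFormula}) for the symmetry of the $h$-vector, and some form of Poincar\'e--Lefschetz duality on $\Delta$ that interacts with the $G$-action to relate $\beta_{j-1}(\Delta)^0$ with $\beta_{d-j-1}(\Delta)^0$. Extracting the $\lambda^i t^0$ coefficient from Theorem~\ref{SigmaModuleThm} with $m=0$ gives
\[
\dim_\field\left(\field[\Delta]/\Sigma(\Theta;\field[\Delta])\right)_i^0 = \frac{1}{p}\left(h_i(\Delta)+(-1)^{i+1}\binom{d}{i}\right)+(-1)^i\binom{d}{i}\cdot[i=0]+\binom{d}{i}\sum_{j=0}^i(-1)^{i-j-1}\beta_{j-1}(\Delta)^0,
\]
up to bookkeeping of the ``$(-1)^i\binom{d}{i}t^{mi}$'' term (which only contributes in degree $i=0$ since $m=0$). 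So the claim~(\ref{DS}) reduces to an identity purely among $h_i(\Delta)$, $\binom{d}{i}$, and the refined Betti numbers $\beta_{j-1}(\Delta)^0$.

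\textbf{Key steps.} First I would establish that the ordinary (unrefined) version of this symmetry already holds: by Theorem~\ref{Duality} applied with the specific l.s.o.p.\ $\Theta\subset A_1^0$, we know $\dim_\field(\field[\Delta]/\Sigma(\Theta;\field[\Delta]))_i = \dim_\field(\field[\Delta]/\Sigma(\Theta;\field[\Delta]))_{d-i}$, which unwinds (via Theorem~\ref{BBMD} and Klee's formula) to the relation $\sum_{j}(-1)^{\bullet}\beta_{j-1}(\Delta)$ appearing symmetrically; equivalently this is the manifestation of Poincar\'e duality $\beta_{j-1}(\Delta)\cong\beta_{d-j-1}(\Delta)$ for a connected orientable homology manifold. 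Second, and this is the crux, I would refine this to the isotypic level: I claim $\beta_{j-1}(\Delta)^0 = \beta_{d-j-1}(\Delta)^0$. The point is that the Poincar\'e duality isomorphism $\widetilde{H}^{j-1}(\Delta)\cong\widetilde{H}_{d-1-(j-1)}(\Delta)$ is $G$-equivariant up to a twist by the orientation character of the $G$-action, and since $\widetilde{H}^{d-1}(\Delta)=\widetilde{H}^{d-1}(\Delta)^0$ by hypothesis, $G$ acts trivially on the top homology, i.e.\ the action is orientation-preserving, so the orientation character is trivial and the duality isomorphism preserves isotypic components exactly. (Over $\CC$, $G$ abelian, this statement can also be phrased as: the character of the $G$-representation $\widetilde{H}^{j-1}(\Delta;\CC)$ equals that of $\widetilde{H}^{d-1-j+1}(\Delta;\CC)$, using that the Lefschetz number computation pairs these up with a sign only, combined with the triviality of the top action.) Third, I would substitute $\beta_{j-1}(\Delta)^0 = \beta_{d-j-1}(\Delta)^0$ and Klee's formula for $h_{d-i}(\Delta)-h_i(\Delta)$ into the displayed formula above, reindex the sum $\sum_{j=0}^{d-i}(-1)^{d-i-j-1}\beta_{j-1}^0$ by $j\mapsto d-j$ so that it runs over $\beta_{j-1}^0$ with $j$ in a complementary range, and verify that the leftover terms (the $\binom{d}{i}$-coefficient pieces and the $\tilde\chi(\Delta)$ contribution from Klee) cancel. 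This last step is a routine binomial/telescoping computation of the kind already carried out inside the proof of Theorem~\ref{SchenzelAnalog}.

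\textbf{Main obstacle.} The genuinely nontrivial point is the equivariant Poincar\'e duality statement $\beta_{j-1}(\Delta)^0 = \beta_{d-j-1}(\Delta)^0$, and in particular pinning down that the hypothesis $\widetilde{H}^{d-1}(\Delta)=\widetilde{H}^{d-1}(\Delta)^0$ is exactly what makes the orientation class $G$-invariant, so that cap product with it is a $G$-map. For a genuine triangulated manifold (as opposed to a mere $\field$-homology manifold) one can invoke classical equivariant Poincar\'e--Lefschetz duality, but I would want to phrase it carefully at the chain level — the fundamental cycle $[\Delta]\in\widetilde{H}_{d-1}(\Delta)$ is fixed by $G$ (that is the content of the hypothesis combined with connectedness), hence the duality isomorphism $-\cap[\Delta]:\widetilde{H}^{k}(\Delta)\to\widetilde{H}_{d-1-k}(\Delta)$ commutes with $G$, giving the desired isomorphism of $\field[G]$-modules and in particular of their $\chi=0$ isotypic components. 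Everything else is combinatorial bookkeeping drawn from the formulas already proved above.
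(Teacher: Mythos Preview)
Your overall strategy is correct and is close to the paper's, but there is one genuinely different choice and one real gap.

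\textbf{Where you diverge from the paper.} For the key duality on the refined Betti numbers, the paper does \emph{not} invoke equivariant Poincar\'e duality on $\Delta$. Instead it passes to the quotient $\Gamma=|\Delta|/G$ and uses the transfer isomorphism $\beta_j(\Delta)^0=\beta_j(\Gamma)$ (this is equation~(\ref{homologyOfQuotient})). The hypothesis $\widetilde{H}^{d-1}(\Delta)=\widetilde{H}^{d-1}(\Delta)^0$ is used to conclude that $\Gamma$ is itself orientable, after which ordinary Poincar\'e duality on $\Gamma$ gives the symmetry. Your route via cap product with the $G$-fixed fundamental class of $\Delta$ is a legitimate alternative and arguably more direct; the paper's route has the advantage that it places the Euler-characteristic step (below) in its natural home.

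\textbf{Bookkeeping errors.} Two small but real slips. First, Poincar\'e duality on a $(d-1)$-manifold gives $\beta_{j-1}=\beta_{d-j}$, not $\beta_{d-j-1}$; your index is off by one. Second, when $m=0$ the term $(-1)^i\binom{d}{i}t^{mi}$ equals $(-1)^i\binom{d}{i}t^0$ for \emph{every} $i$, not just $i=0$; your Iverson bracket ``$[i=0]$'' is wrong. The correct extracted coefficient is
\[
\dim_\field\bigl(\field[\Delta]/\Sigma(\Theta;\field[\Delta])\bigr)_i^0=\frac{h_i(\Delta)}{p}+\frac{(-1)^i(p-1)}{p}\binom{d}{i}+\binom{d}{i}\sum_{j=0}^i(-1)^{i-j-1}\beta_{j-1}(\Delta)^0,
\]
matching the paper.

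\textbf{The gap.} Your ``routine binomial/telescoping'' claim in the third step hides a genuine topological input. After applying Klee's formula and Poincar\'e duality, the difference $D_i-D_{d-i}$ contains a term $\frac{(-1)^{d-i}}{p}\binom{d}{i}\,\tilde\chi(\Delta)$ coming from Klee, and a term $(-1)^{d-i-1}\binom{d}{i}\sum_j(-1)^j\beta_j(\Delta)^0$ coming from the Betti sums. These do \emph{not} cancel by pure combinatorics: one must know that the isotypic Euler characteristic $\sum_j(-1)^j\beta_j(\Delta)^0$ equals $\chi(\Delta)/p$ (up to the reduced/unreduced correction). In the paper this is exactly the step where the covering-space identity $\chi(\Delta)=p\,\chi(\Gamma)$ is invoked. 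In your framework one could instead argue that the free $G$-action makes each $C^k(\Delta)$ a free $\field[G]$-module, hence $\dim C^k(\Delta)^0=\dim C^k(\Delta)/p$ and so $\chi(\Delta)^0=\chi(\Delta)/p$; but this is an additional ingredient, not bookkeeping, and your outline omits it.
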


\begin{proof}Applying Theorem \ref{SigmaModuleThm}, we can write
	\[
	\dim_\field \left(\field[\Delta]/\Sigma(\Theta;\field[\Delta])\right)_i^0=\frac{h_i(\Delta)}{p}+\frac{(-1)^i(p-1)}{p}{d\choose i}+{d\choose i}\sum_{j=0}^i(-1)^{i-j-1}\beta_{j-1}(\Delta)^0.
	\]
	Using (\ref{KleesFormula}),
	\begin{align*}
	\frac{h_i(\Delta)}{p}-\frac{h_{d-i}(\Delta)}{p}&=\frac{(-1)^{i+1}}{p}{d\choose i}((-1)^{d-1}\tilde{\chi}(\Delta)-1)\\
	&=\frac{(-1)^{d-i}}{p}{d\choose i}\tilde{\chi}(\Delta)+\frac{(-1)^i}{p}{d\choose i}\\
	&=\frac{(-1)^{d-i}}{p}{d\choose i}\chi(\Delta)+\frac{(-1)^{d-i-1}}{p}{d\choose i}+\frac{(-1)^i}{p}{d\choose i}.
	\end{align*}
	Hence,
	\[
	\left[\frac{h_i(\Delta)}{p}+\frac{(-1)^i(p-1)}{p}{d\choose i}\right]-\left[\frac{h_{d-i}(\Delta)}{p}+\frac{(-1)^{d-i}(p-1)}{p}{d\choose i}\right]
	\]
	\begin{equation}\label{hVectorDifference}
	=\frac{(-1)^{d-i}}{p}{d\choose i}\chi(\Delta)+(-1)^i{d\choose i}+(-1)^{d-i-1}{d\choose i}.
	\end{equation}
	By (\ref{homologyOfQuotient}), $\beta_{j-1}(\Delta)^0=\beta_{j-1}(\Gamma)$, where $\Gamma$ is the quotient space $|\Delta|/G$. Since $\widetilde{H}^{d-1}(\Delta)=\widetilde{H}^{d-1}(\Delta)^0$ by assumption, $\Gamma$ is itself an orientable manifold. Hence, Poincar\'{e} duality applies and $\beta_{j-1}(\Delta)^0=\beta_{d-j}(\Delta)^0$ for $j>1$ while $\beta_0(\Delta)^0=\beta_{d-1}(\Delta)^0-1$. Then
	\begin{align*}
	{d\choose i}\sum_{j=0}^i(-1)^{i-j-1}\beta_{j-1}(\Delta)^0&=(-1)^{i+1}{d\choose i}+{d\choose i}\sum_{j=0}^i(-1)^{i-j-1}\beta_{d-j}(\Delta)^0\\
	&=(-1)^{i+1}{d\choose i}+{d\choose i}\sum_{\ell=i+1}^{d}(-1)^{d-i-\ell}\beta_{\ell -1}(\Delta)^0.
	\end{align*}
	Thus,
	\begin{align*}
	{d\choose i}\sum_{j=0}^i(-1)^{i-j-1}\beta_{j-1}(\Delta)^0&-{d\choose i}\sum_{j=0}^{d-i}(-1)^{d-i-j-1}\beta_{j-1}(\Delta)^0\\
	&=(-1)^{i+1}{d\choose i}+{d\choose i}\sum_{j=0}^d(-1)^{d-i-j}\beta_{j-1}(\Delta)^0\\
	&=(-1)^{i+1}{d\choose i}+(-1)^{d-i-1}{d\choose i}\tilde{\chi}(\Gamma)
	\end{align*}
	Now note that $\Delta$ is a $p$-sheeted covering space for $\Gamma$. This implies that $\chi(\Delta)=p\chi(\Gamma)$ (see \cite[Section 2.2]{Hatcher}). Hence, we can re-write the difference as
	\begin{align*}
	(-1)^{i+1}{d\choose i}+(-1)^{d-i-1}{d\choose i}\tilde{\chi}(\Gamma)&=(-1)^{i+1}{d\choose i}+(-1)^{d-i-1}{d\choose i}\chi(\Gamma)+(-1)^{d-i}{d\choose i}\\
	&=(-1)^{i+1}{d\choose i}+\frac{(-1)^{d-i-1}}{p}{d\choose i}\chi(\Delta)+(-1)^{d-i}{d\choose i}.
	\end{align*}
	Adding this difference to (\ref{hVectorDifference}) now completes the proof.
\end{proof}

\begin{remark}
	In the case that $G=\ZZ/p\ZZ$ with $p$ odd, the hypothesis $\widetilde{H}^{d-1}(\Delta)=\widetilde{H}^{d-1}(\Delta)^0$ of Theorem \ref{DSTheorem} is always true. Indeed, the fundamental class of $\Delta$ generating $\widetilde{H}^{d-1}(\Delta)$ is of the form
	\[
	z = \sum_{\widehat{\sigma}\in C^{d-1}(\Delta)}a_\sigma \widehat{\sigma}
	\]
	with $a_\sigma=\pm 1$ for all $\sigma$. If $h\in G$ acts on this class, then it can only flip the sign of some coefficients; it cannot introduce a factor of $\zeta^i$ across all coefficients for some non-trivial $i$.
\end{remark}

In fact, the theorem above can be greatly strengthened by appealing to algebraic properties of $\field[\Delta]/\Sigma(\Theta;\field[\Delta])$. First, we will need the following theorem (see \cite[Theorem 1.4]{NS-Gorenstein} or \cite[Remark 3.8]{BBMD}).
\begin{theorem}\label{Gorenstein}
	Let $\Delta$ be a triangulation of a $(d-1)$-dimensional connected manifold that is orientable over $\field$, and let $\Theta$ be a linear system of parameters for $\field[\Delta]$. Then $\field[\Delta]/\Sigma(\Theta;\field[\Delta])$ is an Artinian Gorenstein $\field$-algebra.
\end{theorem}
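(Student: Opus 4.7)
The target theorem is cited in the paper as \cite[Theorem 1.4]{NS-Gorenstein} / \cite[Remark 3.8]{BBMD}, so my proposal follows the general Novik--Swartz / Murai--Novik--Yoshida strategy. The plan is to combine (i) Proposition \ref{sigmaModuleProp}, which identifies $\Sigma(\Theta;\field[\Delta])/\Theta\field[\Delta]$ with a direct sum of shifted copies of the lower local cohomology modules of $\field[\Delta]$, (ii) Hochster's formula (Theorem \ref{Reisner}) to compute these local cohomologies in terms of the topology of $\Delta$, and (iii) Poincar\'e duality for the underlying manifold to produce a non-degenerate multiplication pairing on the quotient.

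First I would check that $R:=\field[\Delta]/\Sigma(\Theta;\field[\Delta])$ is Artinian. Since $\Theta\field[\Delta]\subseteq\Sigma(\Theta;\field[\Delta])$, $R$ is a quotient of $\field[\Delta]/\Theta\field[\Delta]$, which is finite-dimensional over $\field$ because $\Theta$ is an l.s.o.p. Note also that a triangulation of a connected manifold is a $\field$-homology manifold and hence Buchsbaum, so every result in Sections~2--4 of the excerpt applies.

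Next I would show that the Hilbert function of $R$ is symmetric about $d/2$ with $\dim_\field R_d=1$. Theorem \ref{BBMD} gives
\[
\dim_\field R_i = h_i(\Delta)+{d\choose i}\sum_{j=0}^i(-1)^{i-j-1}\beta_{j-1}(\Delta).
\]
For $\Delta$ a connected orientable manifold, Poincar\'e duality yields $\beta_{j-1}(\Delta)=\beta_{d-j}(\Delta)$ for $1<j<d$, together with $\beta_{-1}(\Delta)=\beta_{d-1}(\Delta)=1$, while Klee's identity (\ref{KleesFormula}) controls $h_i(\Delta)-h_{d-i}(\Delta)$ in terms of $\tilde\chi(\Delta)$. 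A routine algebraic manipulation, parallel to the one carried out in the proof of Theorem \ref{DSTheorem} but applied to ordinary (un-refined) Betti numbers, shows that the Poincar\'e-duality contribution from the Betti sum cancels the Klee contribution from the $h_i$'s, so $\dim_\field R_i = \dim_\field R_{d-i}$ for all $i$, and in top degree everything collapses to $1$.

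The main obstacle is upgrading this numerical symmetry to a genuine Gorenstein algebra structure, since a symmetric Hilbert function alone is insufficient. For this step I would invoke Gr\"abe's description of the canonical module $\omega_{\field[\Delta]}$ of a $\field$-homology manifold: orientability forces $\omega_{\field[\Delta]}$ to be isomorphic to $\field[\Delta]$ after a degree shift, \emph{except} for a submodule built precisely from the lower local cohomology modules $H_\mideal^i(\field[\Delta])$ with $i<d$. By Proposition \ref{sigmaModuleProp}, these are exactly the pieces assembled into $\Sigma(\Theta;\field[\Delta])/\Theta\field[\Delta]$, so passing to the quotient $R$ annihilates the obstruction. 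Concretely, one produces the required non-degenerate pairing $R_i\times R_{d-i}\to R_d\cong\field$ by pairing with the fundamental class of $\Delta$, realized as a generator of $H_\mideal^d(\field[\Delta])$ in the top graded piece. Establishing non-degeneracy of this pairing is the technical heart of the argument, and it is exactly where orientability is used in an essential way; in the non-orientable case the proposed pairing either fails to be well defined or acquires a kernel.
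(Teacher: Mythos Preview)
The paper does not prove this theorem; it is quoted from \cite{NS-Gorenstein} (with the identification of $\Sigma(\Theta;\field[\Delta])$ with the Novik--Swartz ideal supplied by \cite{BBMD}) and used as a black box in the proof of Theorem~\ref{DSTheorem2}. So there is no in-paper argument to compare against, only the cited ones.

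Your outline tracks the Novik--Swartz strategy at the level of headlines, and the Artinian and Hilbert-symmetry steps are fine. The third step, however, misstates Gr\"abe's theorem and thereby hides the real work. For a connected orientable $\field$-homology manifold one has an honest isomorphism $\omega_{\field[\Delta]}\cong\field[\Delta]$ (up to shift), with no ``except for a submodule'' caveat; the lower local cohomology does not obstruct this isomorphism. Where it enters is in computing the socle of the Artinian reduction. Buchsbaumness gives $\Sigma(\Theta;\field[\Delta])/\Theta\field[\Delta]\subseteq\Soc(\field[\Delta]/\Theta\field[\Delta])$, and the ungraded form of Proposition~\ref{sigmaModuleProp} says the left side has dimension $\binom{d}{i}\beta_{i-1}(\Delta)$ in degree $i$. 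The substantive step---which your sketch elides---is the reverse inequality $\dim_\field\Soc(\field[\Delta]/\Theta\field[\Delta])_i\le\binom{d}{i}\beta_{i-1}(\Delta)$ for $i<d$; this is where Gr\"abe's canonical-module isomorphism is actually used, via Matlis duality and a comparison of $\omega_{\field[\Delta]}/\Theta\omega_{\field[\Delta]}$ with $\field[\Delta]/\Theta\field[\Delta]$. Once the socle of $R$ is forced into degree $d$, your Hilbert-function symmetry gives $\dim_\field R_d=\dim_\field R_0=1$, and a standard graded Artinian $\field$-algebra with one-dimensional socle is Gorenstein. Your ``pair with the fundamental class in $H_\mideal^d(\field[\Delta])$'' picture is suggestive but is not literally the mechanism: the pairing in Theorem~\ref{DSTheorem2} is ring multiplication in $R$, and the fundamental class enters only indirectly through Gr\"abe's isomorphism.
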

With this fact, our next theorem quickly follows.
\begin{theorem}\label{DSTheorem2}
	Let $\Delta$ be a triangulation of a $(d-1)$-dimensional connected manifold that is orientable over $\field$ and admits a free group action by $G$, and let $\Theta$ be a $(\ZZ\times G)$-homogeneous linear system of parameters for $\field[\Delta]$. If $s$ is such that $\field[\Delta]/\Sigma(\Theta;\field[\Delta])_d$ in concentrated in $(\ZZ\times G)$-degree $(d, s)$, then
	\[
	\dim_\field \left(\field[\Delta]/\Sigma(\Theta;\field[\Delta])\right)_i^j = \dim_\field \left(\field[\Delta]/\Sigma(\Theta;\field[\Delta])\right)_{d-i}^{s-j}
	\]
	for $i=0, \ldots, d$.
\end{theorem}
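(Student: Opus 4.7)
The plan is to leverage the Artinian Gorenstein structure of $R := \field[\Delta]/\Sigma(\Theta;\field[\Delta])$, already supplied by Theorem \ref{Gorenstein}, and simply track how the perfect Poincar\'e pairing interacts with the $G$-grading. First, I would note that because $\Theta$ is $(\ZZ \times G)$-homogeneous, the ideal $\Sigma(\Theta; \field[\Delta])$ is $G$-stable (this is exactly the content of the lemma preceding Proposition \ref{sigmaModuleProp}), so the quotient $R$ inherits the $(\ZZ \times G)$-grading and $G$ acts on $R$ by $\field$-algebra automorphisms preserving both the $\ZZ$- and $G$-gradings.

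Next, I would recall that an Artinian Gorenstein graded $\field$-algebra whose socle lies in $\ZZ$-degree $d$ admits a perfect multiplication pairing
\[
R_i \times R_{d-i} \longrightarrow R_d \cong \field
\]
for every $0 \le i \le d$. The entire argument now reduces to observing that this pairing is $G$-equivariant: for $g \in G$, $x \in R_i^{j}$, and $y \in R_{d-i}^{j'}$ one computes
\[
g \cdot (xy) = (g \cdot x)(g \cdot y) = \zeta^{j} x \cdot \zeta^{j'} y = \zeta^{j+j'}(xy),
\]
so $xy \in R_d^{j+j'}$. By hypothesis $R_d = R_d^s$, and therefore the product $xy$ vanishes unless $j + j' = s$.

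Consequently, the full Gorenstein pairing decomposes as a direct sum over $j$ of pairings
\[
R_i^{j} \times R_{d-i}^{s-j} \longrightarrow R_d \cong \field,
\]
where every other cross-term is identically zero. Since a direct sum of pairings is perfect if and only if each summand is perfect, each such restricted pairing is itself nondegenerate, giving
\[
\dim_\field R_i^{j} = \dim_\field R_{d-i}^{s-j}
\]
as desired. I do not anticipate a genuine obstacle here: the only subtle point is ensuring the socle is actually a $G$-subspace, which is automatic once we know $R_d$ is concentrated in a single $G$-degree. The theorem is essentially a clean application of the principle that an equivariant perfect pairing on a direct sum of isotypic components forces pointwise duality between matched components.
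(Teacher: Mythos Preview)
Your proposal is correct and follows essentially the same approach as the paper's own proof: both invoke Theorem \ref{Gorenstein} to obtain the Gorenstein perfect pairing $R_i \times R_{d-i} \to R_d$, observe that the $G$-grading forces $R_i^j \cdot R_{d-i}^{j'} \subset R_d^{j+j'}$, and conclude from $R_d = R_d^s$ that each restricted pairing $R_i^j \times R_{d-i}^{s-j} \to R_d^s$ is itself perfect. Your write-up is in fact slightly more explicit than the paper's in justifying why the restricted pairings remain nondegenerate (via the direct-sum principle), but the underlying idea is identical.
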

Before starting the proof of this theorem, note that $\dim_{\field}\field[\Delta]/\Sigma(\Theta;\field[\Delta])_d=1$ because $\field[\Delta]/\Sigma(\Theta;\field[\Delta])$ is Gorenstein and hence such an $s$ always exists.
\begin{proof}
	Since $\field[\Delta]/\Sigma(\Theta;\field[\Delta])$ is Gorenstein, the product map
	\[
	\field[\Delta]/\Sigma(\Theta;\field[\Delta])_i \times \field[\Delta]/\Sigma(\Theta;\field[\Delta])_{d-i} \to \field[\Delta]/\Sigma(\Theta;\field[\Delta])_d
	\]
	is a perfect pairing (see \cite[Theorem 2.79]{LefschetzProperties}). Furthermore, the finely-graded product maps
	\[
	\field[\Delta]/\Sigma(\Theta;\field[\Delta])_i^j \times \field[\Delta]/\Sigma(\Theta;\field[\Delta])_{d-i}^k \to \field[\Delta]/\Sigma(\Theta;\field[\Delta])_d^{j+k}
	\]
	only land in a non-zero component of $\field[\Delta]/\Sigma(\Theta;\field[\Delta])_d$ when $j+k\equiv s \pmod p$. That is,
	\[
	\field[\Delta]/\Sigma(\Theta;\field[\Delta])_i^j \times \field[\Delta]/\Sigma(\Theta;\field[\Delta])_{d-i}^{s-j} \to \field[\Delta]/\Sigma(\Theta;\field[\Delta])_d^{s}
	\]
	is a perfect pairing for each $i, j$ and hence
	\[
	\field[\Delta]/\Sigma(\Theta;\field[\Delta])_i^j \cong \field[\Delta]/\Sigma(\Theta;\field[\Delta])_{d-i}^{s-j}
	\]
	as vector spaces over $\field$.
\end{proof}
By comparing \ref{DSTheorem} with \ref{DSTheorem2}, we obtain the following corollary.
\begin{corollary}
	Let $\Delta$ be a $(d-1)$-dimensional triangulation of a connected orientable manifold admitting a free group action by $G$. If $\Theta\subset A_1^0$ and $\widetilde{H}^{d-1}(\Delta)=\widetilde{H}^{d-1}(\Delta)^0$, then $\field[\Delta]/\Sigma(\Theta;\field[\Delta])_d$ in concentrated in $(\ZZ\times G)$-degree $(d, 0)$ and
	\[
	\dim_\field \left(\field[\Delta]/\Sigma(\Theta;\field[\Delta])\right)_i^j = \dim_\field \left(\field[\Delta]/\Sigma(\Theta;\field[\Delta])\right)_{d-i}^{p-j}
	\]
	for $i=0, \ldots, d$ and $j=0, \ldots, p-1$.
\end{corollary}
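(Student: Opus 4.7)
The plan is to combine Theorems \ref{DSTheorem} and \ref{DSTheorem2} directly, with the only real content being to locate the $G$-degree of the one-dimensional socle of $\field[\Delta]/\Sigma(\Theta;\field[\Delta])$.

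First, I would observe that the hypotheses of the corollary subsume those of both Theorem \ref{DSTheorem} and Theorem \ref{DSTheorem2}: $\Delta$ is an orientable connected manifold with a free $G$-action, and $\Theta \subset A_1^0$ is in particular $(\ZZ \times G)$-homogeneous (each $\theta_i$ sits in bidegree $(1,0)$). Thus both symmetry statements are available.

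Next, I would identify $s$ from Theorem \ref{DSTheorem2}, using Theorem \ref{DSTheorem} at the low end of the grading. Since $\Sigma(\Theta;\field[\Delta])$ has no nonzero part in $\ZZ$-degree $0$, we have
\[
\dim_\field \bigl(\field[\Delta]/\Sigma(\Theta;\field[\Delta])\bigr)_0^0 = \dim_\field \field[\Delta]_0^0 = 1,
\]
and all other $G$-components in $\ZZ$-degree $0$ vanish. By Theorem \ref{DSTheorem} applied with $i=0$,
\[
\dim_\field \bigl(\field[\Delta]/\Sigma(\Theta;\field[\Delta])\bigr)_d^0 = 1.
\]
On the other hand, Theorem \ref{Gorenstein} guarantees that $\field[\Delta]/\Sigma(\Theta;\field[\Delta])$ is Artinian Gorenstein, so its top degree $\bigl(\field[\Delta]/\Sigma(\Theta;\field[\Delta])\bigr)_d$ is one-dimensional in total. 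Comparing, the entire top degree must sit in $G$-component $0$, which is exactly the statement that $\field[\Delta]/\Sigma(\Theta;\field[\Delta])_d$ is concentrated in $(\ZZ \times G)$-degree $(d,0)$.

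Finally, with $s=0$ identified, I would invoke Theorem \ref{DSTheorem2} to obtain
\[
\dim_\field \bigl(\field[\Delta]/\Sigma(\Theta;\field[\Delta])\bigr)_i^j = \dim_\field \bigl(\field[\Delta]/\Sigma(\Theta;\field[\Delta])\bigr)_{d-i}^{-j}
\]
for all $i$ and $j$. Rewriting $-j \pmod p$ as $p-j$ for $j = 0,\dots,p-1$ (using the convention $p \equiv 0$ when $j=0$, consistent with the $G$-grading being indexed by $\ZZ/p\ZZ$) yields the stated formula. No substantive obstacles are anticipated; the only subtlety is the bookkeeping check that Theorem \ref{DSTheorem} is being applied in the right $\ZZ$-degree to pin down where the socle sits, which is precisely what the $i=0$ case supplies.
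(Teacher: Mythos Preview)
Your proposal is correct and matches the paper's intended argument: the paper simply says the corollary follows ``by comparing \ref{DSTheorem} with \ref{DSTheorem2},'' and you have spelled out exactly that comparison, using the $i=0$ case of Theorem~\ref{DSTheorem} together with the one-dimensionality of the socle to pin down $s=0$. No gaps or deviations.
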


\section{Comments}\label{comments}
It was shown by Duval in \cite{Duval} that a version of of Theorem \ref{Reisner} also holds when considering the face ring of a simplicial poset. Our main result then begs the following question.
\begin{question}
	Does Theorem \ref{ReisnerAnalog} extend to simplicial posets admitting free group actions?
\end{question}
An answer to this question will likely require a careful examination of a $(\ZZ\times G)$-grading imposed on the face ring of a simplicial poset, an object that at times is considerably more complicated than the Stanley--Reisner ring.

Stanley was able to examine some implications of equality being attained in his inequalities of Theorem \ref{StanleyCMVeryFree} for the $G=\ZZ/2\ZZ$ case (see \cite[Proposition III.8.2]{St-96}). In light of these results, our next question naturally arises.
\begin{question}
	If equality is attained in either inequality (\ref{zeropart}) or (\ref{nonzeropart}), what can be said about the other entries in the $h$-vector of $\Delta$?
\end{question}

Lastly, this paper would feel incomplete without some mention of the $g$-conjecture. To this end, let $\Delta$ be a $(d-1)$-dimensional orientable $\field$-homology manifold and denote
\[
h_i''(\Delta)=\dim_\field \left(\field[\Delta]/\Sigma(\Theta;\field[\Delta])\right)_i=h_i(\Delta)+{d\choose i}\sum_{j=0}^i(-1)^{i-j-1}\beta_{j-1}(\Delta).
\]
Kalai's manifold $g$-conjecture asserts in part that $h_0''(\Delta)\le h_1''(\Delta)\le \cdots \le h_{\lfloor d/2\rfloor}''(\Delta)$ (in the case that $\Delta$ is a sphere, this is the same as the corresponding part of the usual $g$-conjecture). There is much evidence in favor of this statement, and it has been shown to be true in some special cases (see \cite[Theorem 5.3]{Tight}, \cite[Theorem 1.5]{FaceNumbers}, and \cite[Theorem 1.6]{NS-Gorenstein}). The inequality is sometimes demonstrated by exhibiting a Lefschetz element $\omega\in A_1$ such that the multiplication map
\[
\cdot\omega:\field[\Delta]/\Sigma(\Theta;\field[\Delta])_{i-1}\to\field[\Delta]/\Sigma(\Theta;\field[\Delta])_{i}
\]
is an injection for $1\le i\le \lfloor d/2\rfloor$. As has been our theme, we would like to show that similar inequalities hold under the finer grading. Indeed, if such an $\omega$ can in fact be chosen to be an element of $A_1^m$ for some $m$, then we immediately have the inequalities
\[
\dim_\field \left(\field[\Delta]/\Sigma(\Theta;\field[\Delta])\right)_{i-1}^j\le \dim_\field \left(\field[\Delta]/\Sigma(\Theta;\field[\Delta])\right)_i^{j+m}
\]
for any choice of $j$. Although the Lefschetz elements that have been found thus far cannot be specified to reside in some fixed homogeneous $(\ZZ\times G)$-degree of $A_1$, the strength of the finer grading on $\field[\Delta]/\Sigma(\Theta;\field[\Delta])$ prompts the following conjecture.

\begin{conjecture}Let $\Delta$ be an orientable $\field$-homology manifold admitting a free group action by $\ZZ/p\ZZ$. Then there exists $m$ such that
\[
\dim_\field \left(\field[\Delta]/\Sigma(\Theta;\field[\Delta])\right)_{i-1}^j\le \dim_\field \left(\field[\Delta]/\Sigma(\Theta;\field[\Delta])\right)_i^{j+m}
\]
for $1\le i\le \lfloor d/2\rfloor$ and $0\le j\le p-1$.
\end{conjecture}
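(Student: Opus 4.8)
\medskip
\noindent\textbf{Proof proposal.}
The plan is to deduce the conjectured inequalities from the existence of a \emph{weak Lefschetz element of a fixed fine degree}. Write $N:=\field[\Delta]/\Sigma(\Theta;\field[\Delta])$; since $\Theta$ is $(\ZZ\times G)$-homogeneous, $N$ is $(\ZZ\times G)$-graded, and by Theorems \ref{Gorenstein} and \ref{DSTheorem2} it is an Artinian Gorenstein $\field$-algebra whose fine Hilbert function is symmetric about $\ZZ$-degree $d/2$ (after a reflection in the $G$-degree). Suppose we can produce a single $m$ and a linear form $\omega\in A_1^m$ such that multiplication
\[
\cdot\omega:\ N_{i-1}\longrightarrow N_i
\]
is injective for $1\le i\le\lfloor d/2\rfloor$. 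Because $\omega$ is homogeneous of $(\ZZ\times G)$-degree $(1,g^m)$, this map sends $N_{i-1}^j$ into $N_i^{j+m}$; the restriction of an injective graded map to a graded component of its source is again injective, so $\dim_\field N_{i-1}^j\le\dim_\field N_i^{j+m}$ for every $j$, which is exactly the statement of the conjecture. Thus the whole problem collapses to exhibiting a weak Lefschetz element inside one of the $p$ proper subspaces $A_1^0,\dots,A_1^{p-1}$ of $A_1$.

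The first line of attack is to run the known proofs of the (ungraded) manifold $g$-inequality $G$-equivariantly. In the cases where that inequality is available --- see \cite[Theorem 5.3]{Tight}, \cite[Theorem 1.5]{FaceNumbers}, \cite[Theorem 1.6]{NS-Gorenstein} --- one obtains a nonempty Zariski-open set $U\subseteq A_1$ of elements $\omega$ for which all maps $\cdot\omega:N_{i-1}\to N_i$ with $i\le\lfloor d/2\rfloor$ are injective, and the task becomes to show that $U\cap A_1^m\neq\varnothing$ for some $m$. I would first target the $G$-invariant case $m=0$: here $A_1^0$ is spanned by the orbit sums $\sum_{v\in\mathcal O(w)}x_v$ and is canonically the space of linear forms pulled back from the quotient $\Gamma=\Delta/G$, so (when $\Gamma$ is a simplicial complex or poset, as in the last proposition of Section \ref{Inequalities}) injectivity of $\cdot\omega$ on the degree-$0$ part should reduce to a weak Lefschetz statement for $\field[\Gamma]/\Sigma$ together with the identification (\ref{homologyOfQuotient}). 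Better still, if one of the classical constructions of Lefschetz elements can be arranged to use a $G$-invariant form such as $\sum_{v=1}^n x_v\in A_1^0$, the case $m=0$ follows immediately.

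A softer alternative is averaging: starting from any Lefschetz $\omega$, decompose $\omega=\sum_{k=0}^{p-1}\omega_k$ with $\omega_k\in A_1^k$, and hope some single summand $\omega_m$ is already half-Lefschetz. But the maps $\cdot\omega_k$ are precisely the $(\ZZ\times G)$-graded blocks of $\cdot\omega$, so this asks whether injectivity of the total non-homogeneous map forces injectivity of one homogeneous block on every fine-graded piece simultaneously --- which is false in general, and is exactly where the difficulty concentrates.

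The main obstacle, then, is this genuine incompatibility of genericities: a generic Lefschetz element lies in none of the proper subspaces $A_1^k$, whereas a generic element of a fixed $A_1^k$ has no reason to be Lefschetz. I expect that overcoming it will require a representation-theoretic upgrade of the Lefschetz property for $N$ --- an equivariant Hard Lefschetz / Hodge--Riemann package compatible with the $\ZZ/p\ZZ$-action --- rather than a formal manipulation. Since even the ungraded statement (unimodality of $h''(\Delta)$) is open for arbitrary orientable $\field$-homology manifolds, the realistic first milestone is to prove the conjecture in exactly the cases where the manifold $g$-conjecture is presently known, by revisiting those arguments and carrying the $(\ZZ\times G)$-grading through every step.
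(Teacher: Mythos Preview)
The statement you are addressing is a \emph{conjecture}, and the paper offers no proof of it; the sentence immediately preceding the conjecture in Section~\ref{comments} says explicitly that ``the Lefschetz elements that have been found thus far cannot be specified to reside in some fixed homogeneous $(\ZZ\times G)$-degree of $A_1$.'' Your first paragraph recapitulates exactly the paper's own motivation: if a weak Lefschetz element $\omega$ can be chosen in some $A_1^m$, then the graded multiplication $\cdot\omega:N_{i-1}^j\to N_i^{j+m}$ is injective and the inequalities follow. That reduction is correct and is precisely what the paper already observes; it is not a proof.

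To your credit, you do not pretend otherwise. You identify the genuine obstruction in your third and fourth paragraphs: the Zariski-open set of Lefschetz elements (in the cases where it is known to be nonempty) need not meet any of the proper subspaces $A_1^0,\dots,A_1^{p-1}$, and decomposing a Lefschetz $\omega$ into its isotypic pieces $\omega_k$ gives no control over any single block. Your suggestion to attack $m=0$ via the quotient $\Gamma=\Delta/G$ is reasonable heuristics, but note two issues: first, $\Gamma$ need not be a simplicial complex (or even a simplicial poset) in general, so the reduction is not automatic; second, even when $\Gamma$ is well-behaved, a weak Lefschetz statement for $\field[\Gamma]/\Sigma$ would at best control the $G$-invariant part $N_\bullet^0$, not the other isotypic components $N_\bullet^j$ for $j\neq 0$, so this does not by itself yield the full conjecture with a single $m$. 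In summary, your proposal is a faithful expansion of the paper's own commentary and a sensible research plan, but it contains no argument that goes beyond what the paper states, and the conjecture remains open.
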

\section*{Acknowledgements}The author would like to thank Martina Juhnke-Kubitzke, Isabella Novik, Travis Scholl, and an anonymous referee for many helpful comments and conversations.

\bibliographystyle{alpha}
\bibliography{GroupActions-biblio}
\end{document}